\documentclass[11pt,reqno]{amsart}

\usepackage{amsfonts, amssymb, amsthm, amsmath, bbm, wasysym, enumerate, amsxtra, latexsym, fullpage, amscd, graphics, epic, youngtab,ytableau, sansmath, mathtools, multicol, url, hyperref, bbm, blindtext, wasysym, comment, datetime, stmaryrd,soul,xcolor,import,graphicx,transparent,algorithm,algpseudocode}
\usepackage[shortlabels]{enumitem}
\usepackage[all,cmtip]{xy}
\usepackage[makeroom]{cancel}  
\usepackage{youngtab}
\usepackage{ytableau}
\usepackage{tikz}
\usetikzlibrary{cd}

\usepgflibrary{shapes.geometric}

\tikzstyle{V}=[draw, fill =black, circle, inner sep=0pt, minimum size=1.5pt]
\tikzstyle{C}=[draw, fill =white, circle, inner sep=0pt, minimum size=1.5pt]
\tikzstyle{over}=[draw=white,double=black,line width=2pt, double distance=.5pt]
\numberwithin{equation}{section}
\usepackage[margin = 0.9in]{geometry}
\theoremstyle{definition}

\newtheorem{theorem}{Theorem}[section]
\newtheorem{lemma}[theorem]{Lemma}
\newtheorem{proposition}[theorem]{Proposition}
\newtheorem{corollary}[theorem]{Corollary}
\newtheorem{definition}[theorem]{Definition}
\newtheorem{remark}[theorem]{Remark}

\newtheorem{example}[theorem]{Example}

\DeclareMathOperator{\I}{\mathcal{I}}

\def\<{\langle}
\def\>{\rangle}

\allowdisplaybreaks 

\tikzstyle directed=[postaction={decorate,decoration={markings,
    mark=at position .65 with {\arrow[arrowstyle]{stealth}}}}]
 \tikzstyle reverse directed=[postaction={decorate,decoration={markings,
    mark=at position .65 with {\arrowreversed[arrowstyle]{stealth};}}}]
    \usetikzlibrary{arrows,shapes,positioning}
\usetikzlibrary{decorations.markings}
\tikzstyle arrowstyle=[scale=1]

\title{The conflated expression graph for an arbitrary permutation}

\author[J. Zhu]{Jieru Zhu}

    \email{jieruzhu699@gmail.com}

\begin{document}

\begin{abstract}
We show that the conflated expression graph for an arbitrary permutation has a unique minimal element and a unique maximal element, and every reduced expression sits on a maximal chain from the source to the sink. This generalizes the work of Manin-Schechtman regarding higher Bruhat orders, and gives an  independent and self-contained  proof of certain results in Hothem. In addition, we give  explicit algorithms for elements in the top and bottom commutation classes. Given any reduced expression $\rho$, we  give an explicit method for producing a maximal chain containing $\rho$. 
\end{abstract}
\keywords{Higher Bruhat orders, Manin-Schechtman orientation, expression graphs}
 
\maketitle 
\section{Introduction}
The expanded expression graph, or the graph of reduced expressions, was studied as early as 1969, when Tits showed that this graph is connected \cite{Tit69}. Since then, there has been considerable interest in studying the properties of this graph \cite{Sta84,DA10,RR13}, which unravels the combinatorics of the Coxeter relations between the reduced expressions for a fixed element. Recent work of \cite{As19} shows that this graph admits a ranked poset structure,  the rank function being the co-inversion number, and has a unique maximal element with respect to the rank. 

The conflated expression graph is a quotient of the expanded expression graph once we identify all  commutator relations. It has a heated interest in categorification: the paths in the conflated expression graph induce path morphisms in Soergel calculus. There have been many interesting questions regarding the path morphisms, for example, the forking path conjecture \cite{Lib11} and the idempotent magical property coined in \cite{Eli16b}.  Understanding the path morphisms will facilitate understanding of the Karoubi envelope of the diagrammatic Hecke category, or the anti-spherical category, and ultimately of the indecomposable Soergel bimodules in this category, which descend to the Kazhdan-Lusztig basis upon decategorification.

To generalize the results of \cite{Eli16b} to the case of an arbitrary permutation, we need to establish similar properties for the conflated expression graph. The notion of higher Bruhat orders, originally studied by Manin-Schechtman \cite{MS89},   has been generalized  to an arbitrary Bruhat interval \cite{Zi93,Ho21}, and this has been further generalized to the case of the affine symmetric groups \cite{BCL26}. In this paper, we give an independent and self-contained proof for the following result: the conflated expression graph for an arbitrary permutation has a unique source and a unique sink with respect to the Manin-Schechtman order. This result highly depends on the fact that every reduced expression sits on a maximal chain from the source to the sink, a key result that we proved in Theorem~\ref{th:key}. Moreover, the number of braid moves between the source and the sink is equal to the number of $321$-patterns, i.e. the number of triples $(i,j,k)$ such that $i<j<k$ and $w_i>w_j>w_k$. This recovers certain results in \cite{Ho21,BCL26}. 

Since our approach is rather hands-on, we mention a few new results as a consequence. The proof of Theorem~\ref{th:key} gives an explicit recipe on how to produce a maximal chain containing an arbitrarily given reduced expression. Another new result is an explicit algorithm for constructing the Manin-Schechtman maximal word (see Algorithm 1), whose commutation class is maximal with respect to the Manin-Schechtman order. We also show that this class contains the super-Yamanouchi word defined in \cite{As19}. Similarly, we give an algorithm for the Manin-Schechtman minimal word and the reverse super-Yamanouchi word (see Algorithm 3), whose commutation class is minimal. These algorithms allow one to work with the source and the sink in a concrete and practical way. A non-expert shall be able to bypass the Manin-Schechtman convention and use these criteria to verify whether a word is maximal or minimal (combined with \cite[Corollary 2.10]{As19} if necessary.) A forthcoming work of the author and collaborator \cite{DZ} will define certain parabolic coset representatives that meet the reverse super-Yamanouchi criteria.

\section{The Manin-Schechtman partial order}\label{sec:MS}
We first recall some of the conventions and results in \cite{MS89}, and refer the reader to Section~2 of \cite{MS89} for more details. Let $\underline{n}=\{1,2,\dots,n\}$ and $C(n,k)$ be the set of $k$-element subsets of $\underline{n}$. Throughout this paper, we will often use the ordered tuple $(a_1,a_2,\dots,a_k)$ to denote the set $\{a_1,a_2,\dots,a_k\}$, if $1\leq a_1< a_2<\cdots < a_k \leq n$. \footnote{When the context is clear, we will also sometimes drop the commas in between.} The lexicographic ordering on $C(n,k)$ is such that $(a_1,a_2,\dots,a_k)< (b_1,b_2,\dots,b_k)$ if and only if $\exists p$, $0\leq p\leq n-1$ such that $a_i=b_i$ for $1\leq i\leq p$, and $a_{p+1}<b_{p+1}$.  A packet is the set of all $k$-element (i.e. co-cardinality one) subsets of $K$ for some $K\in C(n,k+1)$, denoted as $P(K)$. An \emph{admissible} order on $C(n,k)$ is a total order which induces either the lexicographic order or the anti-lexicographic order on any given packet. Let $A(n,k)$ be the set of all admissible orders on $C(n,k)$. Two sets $K_1,K_2\in C(n,k)$ are said to \emph{commute} if they do not belong to a common packet, i.e. $|K_1\cup K_2|\geq k+2$. We use the sequence $\rho=K_1K_2\cdots K_N$, $N=\binom{n}{k}$ to denote the total order $K_1\rho K_2\cdots \rho K_N$. For $\rho_1,\rho_2 \in A(n,k)$, we call $\rho_1,\rho_2$ equivalent if they differ by exchanging a pair of commuting neighbors of the sequence. We take the transitive closure of this relation and form equivalence classes in $A(n,k)$, denoted as $B(n,k)$. A chain with respect to $\rho$ is a subset $\mathcal{J}\subset C(n,k)$ such that if $K_1,K_2\in \mathcal{J}$ and $K_1\rho K\rho K_2$, then $K\in \mathcal{J}$.  
Suppose $K\in C(n,k+1)$, and $P(K)$ forms a chain with respect to $\rho\in A(n,k)$, and furthermore, suppose that $\rho$ induces the lexicographic order on $P(K)$. Then we define $\operatorname{inv}_K(\rho)$ to be the total order in which the elements of $P(K)$ are reversed in $\rho$, while the rest retain their positions.\footnote{In \cite{MS89}, this operator was denoted as $p_K$. Here we changed the notation to avoid confusion with the packet.} For $r\in B(n,k)$, suppose that there is a class representative $\rho$ for $r$, for which $\operatorname{inv}_K$ is defined, then define $\operatorname{inv}_K(r)$ to be the class represented by $\operatorname{inv}_K(\rho)$.  One can check that this  is independent of the chosen class representative $\rho$, i.e. $\operatorname{inv}_K$ is well-defined on $B(n,k)$. For $\rho\in A(n,k)$, Let $\operatorname{Inv(\rho)}$ be the set of all $K\in C(n,k+1)$ such that $\rho$ induces the anti-lexicographic order on $P(K)$, then one can check that $\operatorname{Inv}$ is also well-defined on $B(n,k)$. 

Let $\rho_{\operatorname{min}}$ be the lexicographic order on $C(n,k)$, and $r_{\operatorname{min}}\in   B(n,k) $ be the class it represents. Similarly, let $\rho_{\operatorname{max}}$ be the anti-lexicographic order on $C(n,k)$, and $r_{\operatorname{max}}  \in B(n,k)$ the class it represents. The main result of \cite{MS89} is as follows.
\begin{theorem}\label{th:ms}

(1) There is a well-defined partial order on $B(n,k)$, which is the transitive closure of the following relation: $r_1\leq r_2$ iff there exists $K\in C(n,k+1)$ such that $\operatorname{inv}_K$ is defined for $r_1$, and $r_2=\operatorname{inv}_K(r_1)$.

(2) $r_{\operatorname{min}} $ is the unique minimal element and  $r_{\operatorname{max}}$  is the unique maximal element in $B(n,k)$. Moreover, $B(n,k)$ forms a ranked poset, with $|\operatorname{Inv}(\cdot)|$ being the rank function. 

(3) There is a bijection between maximal chains in $B(n,k)$ and $A(n,k+1)$, given by \begin{align*}
    r_{\operatorname{min}}< \operatorname{inv}_{K_1}(r_{\operatorname{min}})< \operatorname{inv}_{K_2}\operatorname{inv}_{K_1}(r_{\operatorname{min}})<\cdots <r_{\operatorname{max}}=\operatorname{inv}_{K_N}\dots\operatorname{inv}_{K_2}\operatorname{inv}_{K_1}(r_{\operatorname{min}})\\
\mapsto K_1\cdots K_N, N=\binom{n}{k+1}.
\end{align*}

(4) Each element $r\in B(n,k)$ is uniquely determined by its inversion set $\operatorname{Inv}(r)$.

\end{theorem}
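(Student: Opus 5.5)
We would follow the Manin–Schechtman strategy and organize everything around the inversion set map $\operatorname{Inv}\colon B(n,k)\to 2^{C(n,k+1)}$. The first step is a characterization of which subsets $S\subset C(n,k+1)$ arise as $\operatorname{Inv}(r)$. The candidate condition is \emph{consistency}: for every $L\in C(n,k+2)$, the intersection $S\cap P(L)$ is a beginning or an ending segment of $P(L)$ in the lexicographic order. Necessity comes from a direct analysis of how an admissible order restricts to the $k$-subsets of a fixed $(k+2)$-set. We expect this finite local computation to show that only beginning or ending segments of $P(L)$ can be anti-lex reversed. We also check that $\operatorname{Inv}$ is constant on commutation classes, because swapping commuting neighbours never changes the relative order within a packet. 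For $\operatorname{inv}_K$ we verify that it changes $\operatorname{Inv}$ by adding exactly $K$: reversing the chain $P(K)$ flips the order on $P(K)$ itself. For any other packet $P(K')$, $|P(K)\cap P(K')|\le 1$, so the relative order of the elements of $P(K')$ is unchanged. This gives (1) provisionally, with $|\operatorname{Inv}|$ strictly increasing along each relation, so the transitive closure is antisymmetric, and $|\operatorname{Inv}|$ is a rank function once (2) is known.

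The central step, and the one we expect to be the main obstacle, is the following lemma. If $r\ne r_{\max}$, then there exists $K\notin\operatorname{Inv}(r)$ such that $P(K)$ forms a chain in some representative $\rho$ of $r$. Dually, if $r\ne r_{\min}$, there exists $K\in\operatorname{Inv}(r)$ whose packet is a chain, so that $\operatorname{inv}_K$ can be undone. Our first approach is extremal. Among all $K\notin\operatorname{Inv}(r)$ and all representatives $\rho$, choose a pair minimizing the number of elements lying strictly between the first and last elements of $P(K)$ in $\rho$ but not in $P(K)$. If some such element $J$ does not commute with the packet, then $J$ shares a packet $P(K')$ with some member of $P(K)$, where $K'=K\cup J$ or a related $(k+1)$-set inside some $L\in C(n,k+2)$. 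The consistency condition on $P(L)$ then forces either that $K'\notin\operatorname{Inv}(r)$ with a strictly smaller gap, contradicting minimality, or that $J$ can be commuted outside. Making this induction precise is the delicate part. If the extremal argument stalls, we would instead induct on $n$: restrict to orders on subsets avoiding $n$ or containing $n$, which are projections identified with $C(n-1,k)$ and $C(n-1,k-1)$, and splice chains together as in \cite{MS89}.

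Given the lemma, (2) follows. Starting from any $r$, we repeatedly apply some $\operatorname{inv}_K$, which strictly increases $|\operatorname{Inv}|$. The process must terminate, and it can only terminate at a class with $\operatorname{Inv}=C(n,k+1)$. We then show that only $r_{\max}$ has this property: an order that is anti-lex on every packet can be bubble-sorted to the anti-lex order by commuting moves, because any adjacent pair out of anti-lex order must lie in no common packet. The argument for $r_{\min}$ is symmetric, and rank-ness follows because every cover adds exactly one element to $\operatorname{Inv}$. For (4), we argue by induction down from $r_{\max}$. If $\operatorname{Inv}(r)=\operatorname{Inv}(r')$, pick $K\notin\operatorname{Inv}(r)$ given by the lemma. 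We need $\operatorname{inv}_K$ to be defined for $r'$ as well, and we expect to obtain this from a local criterion: $P(K)$ is a chain in some representative if and only if $\operatorname{Inv}\cup\{K\}$ is consistent. Applying the induction hypothesis to $\operatorname{inv}_K(r)$ and $\operatorname{inv}_K(r')$ and then applying $\operatorname{inv}_K$ again gives $r=r'$.

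Finally, for (3), a maximal chain adds the elements $K_1,\dots,K_N$ one at a time. Consistency of every intermediate set $\{K_1,\dots,K_i\}$ means that on each packet $P(L)$ with $L\in C(n,k+2)$, the $K_i$ appear in lex or anti-lex order, which is exactly admissibility of $K_1\cdots K_N$ in $A(n,k+1)$. Conversely, an admissible order gives consistent prefixes, and the local criterion above shows that each step is a valid $\operatorname{inv}_{K_i}$. Injectivity follows from (4).
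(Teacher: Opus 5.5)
\textbf{Verdict: there is a genuine gap.} The routine parts of your outline are sound, but both load-bearing steps are announced rather than proved.

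For context, the paper does not prove this statement: it is quoted from \cite{MS89}. The nearest in-paper argument is Theorem~\ref{th:key}. That is a ``closest together'' extremal argument for $k=1$, restricted to $\operatorname{Inv}(w)$, in which every possible obstruction is removed by an explicit check in $\mathfrak{S}_4$.

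Your routine steps all hold:
\begin{itemize}
\item $\operatorname{Inv}$ is a class invariant, and $\operatorname{inv}_K$ adds exactly $K$.
\item The relation is antisymmetric.
\item $r_{\operatorname{max}}$ is the only class with full inversion set.
\item (2) follows once the central lemma is known.
\item Consistent prefixes force the labels of a chain to be admissible.
\item Necessity of consistency is a quick check. Take $a<b<c$ in $L$. If exactly the middle one, or exactly the outer two, of $L\setminus a, L\setminus b, L\setminus c$ lay in $\operatorname{Inv}(\rho)$, then $L\setminus\{a,b\}$, $L\setminus\{a,c\}$, $L\setminus\{b,c\}$ would be forced into a cyclic order.
\end{itemize}

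\textbf{First gap: the central lemma.} Minimising the gap does not ensure that a trapped outside element fails to commute with a member of $P(K)$. It can be trapped through a chain $M_j\prec X_1\prec\cdots\prec X_m\prec M_{j'}$ of non-commuting neighbours in which no single $X_i$ blocks $P(K)$ by itself. Your sketch does not address this case.

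Even for a single blocking $J$, more work is needed. Such a $J$ satisfies $|J\cup K|=k+2$ and fails to commute with exactly two members of $P(K)$. You would still have to show, from the restriction of $\rho$ to $C(J\cup K,k)$, that consistency yields some $K'\notin\operatorname{Inv}(r)$ whose packet has strictly smaller gap. For $k=1$ this is exactly what the $\mathfrak{S}_4$ case check in Theorem~\ref{th:key} does, and nothing in your text supplies it for general $k$. Falling back on the induction on $n$ of \cite{MS89} cites the result rather than proving it.

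\textbf{Second gap: the local criterion.} This is the claim that if $K\notin\operatorname{Inv}(r)$ and $\operatorname{Inv}(r)\cup\{K\}$ is consistent, then $P(K)$ is a chain in some representative of $r$. It is a substantial statement, essentially the single-step description of $B(n,k)$ by consistent sets (cf.\ \cite{Zi93}). You give no argument for it, yet both (4) and the surjectivity half of (3) rest on it.

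For (4) you do not need it. A non-commuting pair $J_1,J_2$ lies in the single packet $P(J_1\cup J_2)$, so $\operatorname{Inv}(\rho)$ determines the relative order of every non-commuting pair. The bubble-sort argument you already used for $r_{\operatorname{max}}$ then shows that orders with equal inversion sets are equivalent. This is the same comparison the paper uses to show that the class with $|\operatorname{Inv}|=|\mathcal{I}(w)|$ is unique. The observation also gives well-definedness and injectivity of $\operatorname{inv}_K$ on $B(n,k)$, which your induction for (4) tacitly uses when you ``apply $\operatorname{inv}_K$ again''.

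For (3) the gap is real. You must show that for every admissible $K_1\cdots K_N\in A(n,k+1)$, the move $\operatorname{inv}_{K_i}$ is applicable to the class with inversion set $\{K_1,\dots,K_{i-1}\}$. That needs the local criterion or an equivalent argument, and none is supplied.
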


\begin{remark}\label{rmk:MS}
    It is well-known that the Manin-Schechtman conventions correspond to reduced expressions in the following way.
    \begin{itemize}
        \item $A(n,1)=B(n,1)$ correspond to the permutations in $\mathfrak{S}_n$.
        \item $A(n,2)$ corresponds to the set of all reduced expressions of $w_0$, the longest word in $\mathfrak{S}_n$. These are the vertices in the expanded expression graph. 
        \item $B(n,2)$ corresponds to the set of commutation classes of the reduced expressions of $w_0$. These are the vertices in the conflated expression graph. 
        \item For $|K|=2$ and $\rho \in A(n,1)$, $\operatorname{inv}_K(\rho)$ correspond to commutator relations.
        \item For $|K|=3$ and $\rho \in A(n,2)$, $\operatorname{inv}_K(\rho)$  correspond to braid moves. 
    \end{itemize}
\end{remark}    

Define the Manin-Schechtman graph $\Gamma$ as the graph whose vertices are given by $A(n,1)$, and the directed edges given by the operation $\operatorname{Inv}_K$. Namely, there is a directed edge $\rho_1 \to \rho_2$ labeled $K$, iff there exits $K\in C(n,2)$ such that $\rho_2=\operatorname{inv}_{K}(\rho_1)$. In particular, this records the ranked poset structure on $A(n,1)=B(n,1)$.

\begin{example}\label{ex:1}
We give the Manin-Schechtman graph for $\mathfrak{S}_4$ as follows. Here, vertices are permutations. The edges indicate the operation $\operatorname{inv}_K(\rho)$ for $|K|=2$, which is equivalent to a simple transposition. A directed path from $\rho_{\operatorname{min}}=1234$ to $\rho_{\operatorname{max}}=4321$ indicates a maximal chain in $A(n,1)$, which is equivalent to the sequence of its edges, i.e. a reduced expression. 

Braid moves work as follows: for example, if $\rho=(12)\textcolor{red}{(13)(14)(34)}(24)(23)$ and $K=(134)$, then $\operatorname{inv}_K(\rho)=(12)\textcolor{red}{(34)(14)(13)}(24)(23)$. This move is indicated by the red hexagon in the graph.
\end{example}
\begin{center}
\leavevmode
    \xymatrix{
&&&1234 \ar[dll]_{(12)}\ar[d]^{(23)}\ar[drr]^{(34)} &&&\\
&2134 \ar@[red][d]_{(13)}\ar@[red][dr]^{(34)}&&1324 \ar[d]_{(13)}\ar[dr]^(.4){(24)}&&1243 \ar[dlll]^(.3){(12)}\ar[d]^{(24)}&\\
&2314 \ar[dl]_{(23)}\ar@[red][d]^(.4){(14)}&2143 \ar@[red][d]^{(14)} &3124 \ar[dlll]^{(12)} \ar[dr]^{(24)}&1342 \ar[d]^{(13)} \ar[dr]^{(34)}&1423 \ar[d]^{(23)} \ar[dr]^{(14)}&\\
3214 \ar[dr]^{(14)}&2341 \ar[d]^{(23)}\ar@[red][dr]^{(34)}&2413 \ar@[red][d]^{(13)}\ar[dr]^{(24)} &&3142 \ar[d]^(.3){(14)}&1432 \ar[d]^{(14)}&4123 \ar[dlll]^{(12)}\ar[dl]^{(23)}\\
&3241 \ar[d]^{(24)}&2431 \ar[dr]^(.4){(24)} &4213 \ar[d]^{(13)}&3412 \ar[dlll]^(.6){(12)} \ar[dr]^{(34)} &4132 \ar[d]^{(13)}&\\
&3421 \ar[drr]^{(34)}&&4231 \ar[d]^{(23)} &&4312\ar[dll]^{(12)} &\\
&&&4321&&&}
\end{center}

\section{The general case}
Our next goal is to extend the above notions to an element $w$ that is not the longest word in $\mathfrak{S}_n$. Define the set of inversions $\operatorname{Inv}(w)=\{(w_j,w_i)\mid 1\leq i<j \leq n, w_i>w_j\}$. Then the following result is straightforward.

\begin{lemma}\label{lem:subgraph}
Let $\Gamma_{w}$ be the  subgraph of $\Gamma$, whose vertices are all elements $y\in \mathfrak{S}_n$ such that $y\leq w$ in Bruhat order. Moreover, there is an edge $a\to b$ in $\Gamma_w$ iff there is an edge $a\to b$ in $\Gamma$.  Then $\Gamma_{w}$ is also a ranked poset with a unique minimal element $\operatorname{id}=123\cdots n \in \mathfrak{S}_n$, a unique maximal element $w\in \mathfrak{S}_n$, and any path from $\operatorname{id}$ to $w$ has the set of edges equal to $\operatorname{Inv}(w)$.
\end{lemma}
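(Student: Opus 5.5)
The plan is to identify $\Gamma$ with the right weak order on $\mathfrak{S}_n$ written in one-line notation, and then use the standard inversion-set description of that order. An edge $\rho_1\to\rho_2$ labeled $K=(a,b)$ with $a<b$ means the following. The values $a$ and $b$ occupy adjacent positions in the word $\rho_1$ and appear there in increasing order. The edge swaps them, so $\rho_2=\rho_1 s_i$ with $\ell(\rho_2)=\ell(\rho_1)+1$. Consequently, in the notation $\operatorname{Inv}(y)=\{(y_j,y_i)\mid i<j,\ y_i>y_j\}$, we get
\[
\operatorname{Inv}(\rho_2)=\operatorname{Inv}(\rho_1)\sqcup\{(a,b)\}.
\]
Thus every edge raises the length by one and adds exactly its own label to the inversion set. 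This already shows that $\Gamma_w$, as an induced subgraph of the graded graph $\Gamma$, is ranked by $\ell$ (equivalently by $|\operatorname{Inv}(\cdot)|$). It also shows that along any directed path the labels are distinct, and their union is $\operatorname{Inv}(\text{end})\setminus\operatorname{Inv}(\text{start})$.

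The next step identifies the poset. The reachability order on $\Gamma$ is the right weak order. A standard fact is that $u\le_R v$ holds iff $\operatorname{Inv}(u)\subseteq\operatorname{Inv}(v)$ for value-pair inversions. I would cite this fact, or prove it quickly by induction on $\ell(v)-\ell(u)$. The inductive step runs as follows. If $\operatorname{Inv}(u)\subsetneq\operatorname{Inv}(v)$, then some adjacent ascent $(a,b)$ of $u$ lies in $\operatorname{Inv}(v)$. Otherwise every adjacent ascent of $u$ is also an ascent of $v$, and transitivity then forces $\operatorname{Inv}(v)\subseteq\operatorname{Inv}(u)$. Swapping that ascent gives $u'$ with $\operatorname{Inv}(u)\subsetneq\operatorname{Inv}(u')\subseteq\operatorname{Inv}(v)$, and the induction continues.

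Now apply this to $\Gamma_w$. The element $\operatorname{id}$ lies below every vertex. So once connectivity within $\Gamma_w$ is established, $\operatorname{id}$ is the unique minimal element. For the top, one has to read the vertex set correctly. For the final assertion to hold (every path from $\operatorname{id}$ to $w$ has edge set $\operatorname{Inv}(w)$) and for $w$ to be the unique maximum, the intended vertex set must be the weak-order interval $[\operatorname{id},w]_R=\{y\mid \operatorname{Inv}(y)\subseteq\operatorname{Inv}(w)\}$. These are exactly the prefixes of reduced words of $w$, i.e. the vertices lying on maximal chains corresponding to reduced expressions. I would state this reading explicitly, since strong Bruhat order would include vertices such as $y=s_2\le s_1s_2s_1$ in the appropriate sense that are not below $w$ in $\Gamma$. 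With this reading, every $y$ in the interval lies on a directed path $\operatorname{id}\to y\to w$ inside the interval, by the inductive construction above. Hence $\Gamma_w$ is connected, it is ranked by $\ell$, and it has unique minimum $\operatorname{id}$ and unique maximum $w$. By the inversion-set additivity, the labels along any path $\operatorname{id}\to w$ are distinct and their union is $\operatorname{Inv}(w)\setminus\emptyset=\operatorname{Inv}(w)$.

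The main obstacle is not computational but definitional. It lies in reconciling ``Bruhat order'' in the statement with the subgraph actually intended. The inversion-set characterization of the weak order is the only substantive step, and I expect the inductive ascent-finding argument above to handle it.
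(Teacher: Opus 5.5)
Your argument is correct. The paper gives no proof at all (it calls the lemma ``straightforward''), so your proposal supplies exactly the missing details:
\begin{itemize}
\item Edges of $\Gamma$ are length-increasing adjacent swaps that add their own label to $\operatorname{Inv}$.
\item Reachability in $\Gamma$ is therefore the weak order.
\item Your ascent-finding induction proves the inversion-set characterization of that order.
\end{itemize}
The same transitivity argument in fact gives $u=v$ whenever $\operatorname{Inv}(u)\subseteq\operatorname{Inv}(v)$ and no adjacent ascent of $u$ lies in $\operatorname{Inv}(v)$. This also settles the base case of your induction, where you need that equal inversion sets force equal permutations.

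You are right that the vertex set must be the weak interval $\{y\mid\operatorname{Inv}(y)\subseteq\operatorname{Inv}(w)\}$ rather than the Bruhat interval. However, your illustration does not work. In $\mathfrak{S}_3$, $s_1s_2s_1=w_0$ is also the top of the weak order, so every $y$ lies below it in $\Gamma$.

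A correct example is $w=231$ and $y=132$:
\begin{itemize}
\item $132\le 231$ in Bruhat order.
\item Yet $\operatorname{Inv}(132)=\{(2,3)\}\not\subseteq\{(1,2),(1,3)\}=\operatorname{Inv}(231)$.
\item The only edge of $\Gamma$ leaving $132$ goes to $312$, which is not Bruhat-below $231$.
\end{itemize}
So in the literally defined induced subgraph, $132$ is a second sink, and ``unique maximal element $w$'' fails.

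You also slightly overstate what the weak-order reading is needed for. The last assertion of the lemma does not depend on the reading: any directed path $\operatorname{id}\to w$ in $\Gamma$ only visits prefixes of reduced words of $w$. The weak-order reading is needed for two other things:
\begin{itemize}
\item $w$ being the unique maximum;
\item the ``maximal paths in $\Gamma_w$'' used to define $A(n,2;w)$ being exactly the reduced expressions of $w$.
\end{itemize}
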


Our goal is to study the higher morphisms in $\Gamma_w$. Let $A(n,1;w)=\operatorname{Inv}(w)$. 
Let $A(n,2;w)$ be the set of the total orders on $\operatorname{Inv}(w)$ that represent the maximal paths in $\Gamma_w$. In precise terms, this is the set of elements $\rho=K_1\cdots K_N$, $K_i\in \operatorname{Inv}(w)$, $N=|\operatorname{Inv}(w)|$, so that $\operatorname{inv}_{K_N}\cdots \operatorname{inv}_{K_1}(12\cdots n)$ is well-defined. Define a $321$-\emph{pattern} as a set $\{w_i,w_j,w_k\}$ such that $i<j<k$ and $w_i>w_j>w_k$, and we sometimes write it as an ordered tuple $(w_k,w_j,w_i)$. Similarly, we define $132$-patterns, $213$-patterns, etc,  accordingly. Let $\mathcal{I}(w)$ be the set of 321-patterns in $w$. The following result is straightforward.
\begin{lemma}\label{lem:321}
   Let $K\in C(n,3)$, then $K\in \mathcal{I}(w)$ iff $P(K)\subset \operatorname{Inv}(w)$.
\end{lemma}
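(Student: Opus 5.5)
We unwind the definitions on both sides and compare them value by value. Write $K=(a,b,c)$ with $a<b<c$. Its packet is $P(K)=\{(a,b),(a,c),(b,c)\}$. By definition, $(x,y)\in\operatorname{Inv}(w)$ with $x<y$ means that the value $y$ appears to the left of the value $x$ in the one-line notation $w_1w_2\cdots w_n$. So the claim reduces to a statement about the relative positions of the three values $a,b,c$ in $w$. Let $p_a,p_b,p_c$ denote their positions, i.e.\ $w_{p_a}=a$ and so on.

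For the forward direction, suppose $K\in\mathcal{I}(w)$. Then there are $i<j<k$ with $w_i>w_j>w_k$ and $\{w_i,w_j,w_k\}=\{a,b,c\}$. Necessarily $w_i=c$, $w_j=b$, $w_k=a$, so $p_c<p_b<p_a$. Each of the three pairs then has its larger value to the left of its smaller value. Hence $(b,c)$, $(a,c)$ and $(a,b)$ all lie in $\operatorname{Inv}(w)$, which says $P(K)\subset\operatorname{Inv}(w)$.

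For the converse, suppose $P(K)\subset\operatorname{Inv}(w)$. From $(a,b)\in\operatorname{Inv}(w)$ we get $p_b<p_a$, and from $(b,c)\in\operatorname{Inv}(w)$ we get $p_c<p_b$. Therefore $p_c<p_b<p_a$, with $w_{p_c}=c>w_{p_b}=b>w_{p_a}=a$, so $\{a,b,c\}$ is a $321$-pattern. The condition on $(a,c)$ is implied by the other two and is not needed.

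No step here is a real obstacle. The only care needed is to keep the convention for $\operatorname{Inv}(w)$ straight: pairs are recorded as (smaller value, larger value), matching the ordered-tuple notation for subsets in $C(n,2)$ and $C(n,3)$. One must then check that "inversion" translates to "larger value appears first" rather than to a statement about positions.
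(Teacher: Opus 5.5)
Your proof is correct. The paper calls this lemma ``straightforward'' and gives no proof, and your direct unwinding of the definitions is exactly the intended argument. In particular, you read the convention for $\operatorname{Inv}(w)$ correctly (larger value to the left), and you correctly observe that the converse only needs the pairs $(a,b)$ and $(b,c)$.
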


Theorem~\ref{th:ms}(3), once specialized to the case of $k=1$, implies that any reduced expression for $w_0$ (i.e. a maximal chain in the Manin-Schechtman graph $\Gamma$) corresponds to an admissible order on $C(n,2)$. We will prove the analog of this for $\Gamma_w$.

\begin{lemma}\label{lem:P3}
    Let $\rho \in A(n,2;w)$, then $\rho$ induces either the lexicographic order or the anti-lexicographic order on $P(K)\cap \operatorname{Inv}(w)$, for any given $K\in C(n,3)$. 
\end{lemma}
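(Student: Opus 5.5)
Fix $K=(a,b,c)$ with $a<b<c$. The packet is $P(K)=\{(ab),(ac),(bc)\}$, with lexicographic order $(ab),(ac),(bc)$. Let $\rho=K_1\cdots K_N\in A(n,2;w)$ correspond to the path $\operatorname{id}=y_0\to y_1\to\cdots\to y_N=w$ in $\Gamma_w$, where $y_t=\operatorname{inv}_{K_t}(y_{t-1})$. Each step swaps two adjacent entries that are in increasing order and creates exactly the inversion $K_t$. By Lemma~\ref{lem:subgraph} every pair in $\operatorname{Inv}(w)$ is inverted exactly once along the path. Since inversions only accumulate, $\operatorname{Inv}(y_t)=\{K_1,\dots,K_t\}$.

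The plan is to track only the relative order of the three values $a,b,c$ inside $y_t$, that is, the pattern in $\mathfrak{S}_3$ obtained by restricting $y_t$ to $\{a,b,c\}$. A step $K_t\notin P(K)$ does not change this pattern. A step $K_t\in P(K)$ swaps two of $a,b,c$ that are adjacent in $y_{t-1}$, hence adjacent in the restricted pattern as well. So the restricted pattern moves from $abc$ to $w|_{\{a,b,c\}}$ by adjacent transpositions in $\mathfrak{S}_3$, each creating a new inversion. This is a saturated chain in weak order on $\mathfrak{S}_3$ ending at $w|_{\{a,b,c\}}$, and the inversions it creates are exactly $P(K)\cap\operatorname{Inv}(w)$, in the order in which they appear in $\rho$.

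The proof then reduces to a finite check on $\mathfrak{S}_3$. If $|P(K)\cap\operatorname{Inv}(w)|\le 1$ there is nothing to prove. If all three pairs are present, then $K\in\mathcal I(w)$ by Lemma~\ref{lem:321}. The two chains $abc\to bac\to bca\to cba$ and $abc\to acb\to cab\to cba$ create the inversions in the orders $(ab),(ac),(bc)$ and $(bc),(ac),(ab)$, which are lex and anti-lex respectively.

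If exactly two pairs are present, the pattern is $bca$ or $cab$, since the middle pair $(ac)$ cannot be inverted without one of the others. For $bca$ the forced chain $abc\to bac\to bca$ gives $(ab),(ac)$, which is the lex order restricted. For $cab$ the forced chain $abc\to acb\to cab$ gives $(bc),(ac)$, which is the anti-lex order restricted.

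In the two-element case, a two-element set is always consistent with one of the two orders. The real content is therefore the three-element case, where one must rule out orders such as $(ac)$ first. The main point needing care is the claim that adjacency in $y_{t-1}$ implies adjacency in the restricted pattern. This is immediate, because no other element of $\{a,b,c\}$ lies between two adjacent positions.
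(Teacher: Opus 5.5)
Your proof is correct and is essentially the paper's argument: you track only the relative order of $a,b,c$ (the trick of Remark~\ref{rem:trick}) and note that the two saturated chains from $abc$ to $cba$ in $\mathfrak{S}_3$ create the packet in lexicographic and anti-lexicographic order respectively. Your two-element analysis is not needed for this lemma (it anticipates Lemma~\ref{lem:describe}), and the reason you give there is slightly off: $\{(ab),(bc)\}$ is excluded because inverting both of these pairs forces $(ac)$ to be inverted as well, not because $(ac)$ cannot be inverted on its own.
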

\begin{proof}
    With Lemma~\ref{lem:321} in mind, the claim is only nontrivial when $K \in \mathcal{I}(w)$, otherwise $|P(K)\cap \operatorname{Inv}(w)|\leq 2$.  In particular, let $(a,b,c)$ be a $321$-pattern in $w$. Then the following steps to produce $w$ from $\operatorname{id}$:
\begin{align}
  \cdots  a\cdots b\cdots c \cdots \rightarrow    \cdots  abc \cdots \rightarrow \cdots  a cb  \cdots  \rightarrow \cdots  cab  \cdots  \rightarrow \cdots  cba  \cdots   \rightarrow \cdots  c\cdots b \cdots a  \cdots \label{eq:b1}
 \end{align}
 are given by applying $\operatorname{inv}_K$ to the following sequence of sets $K$:
 \begin{align}
     \cdots (bc)(ac)(ab)\cdots \label{A1}
 \end{align}
where $\cdots$ indicates the sequence of moves to make $a,b,c$ adjacent to each other and then apart to their designated positions. Alternatively, one could have the following sequence of moves:
\begin{align}
  \cdots  a\cdots b\cdots c \cdots  \rightarrow  \cdots  abc \cdots \rightarrow \cdots  bac  \cdots  \rightarrow \cdots  bca  \cdots  \rightarrow \cdots  cba  \cdots   \rightarrow \cdots  c\cdots b \cdots a  \cdots  \label{eq:b2}
 \end{align}
 which is given by applying $\operatorname{inv}_K$ for the following sequence of sets $K$:
 \begin{align}
     \cdots (ab)(ac)(bc)\cdots \label{A2}
 \end{align}
 The packet $P(K)$ for $K=(abc)$ is ordered anti-lexicographically in (\ref{A1}) and lexicographically in (\ref{A2}), and hence the claim follows.
 
\end{proof}

\begin{remark}\label{rem:trick}
There is indeed a quicker proof: one can simply extend the reduced expression of $w$ to that of $w_0$, and because the latter corresponds to an admissible order on $C(n,2)$, the restriction on $\operatorname{Inv}(w)$ satisfies the assumptions in the claim. The proof here is useful for Lemma~\ref{lem:same}, and also for the purpose of showing the following trick: if one examines the Manin-Schechtman graph for $\mathfrak{S}_3$ (c.f. \cite[Section 2]{MS89}), take either of the maximal chains and replace $1,2,3$ with $a,b,c$, the order among $P(K')$ for $K'=(123)$ is the same as the order of $P(K)$ in the two cases above. This means that it suffices to check that all maximal chains for $\mathfrak{S}_3$ are admissible. The relative positions of $a,b,c$ will follow the behavior of $1,2,3$, and so will the corresponding packets. This is because if $L \notin P(K), L\in C(n,2)$, then $\operatorname{inv}_L$  does not change the relative positions of $a,b,c$, and therefore can be omitted from the discussion. We will be using this trick again later when we examine $\mathfrak{S}_4$. 

\end{remark}

Contrary to Theorem~\ref{th:ms}(3), the reverse is not true, i.e the criterion in Lemma~\ref{lem:P3} is not enough to guarantee that $\rho$ corresponds to a reduced expression.  The following lemma gives some additional criteria. The reader may refer to the notion of realizable sets in \cite{Zi93}.

\begin{lemma}\label{lem:describe}
   Let $\rho\in A(n,2;w)$,  then the following is also satisfied:
    

    (1) If $K$ is a $312$-pattern in $w$, then $P(K)\cap \operatorname{Inv}(w)$ is ordered anti-lexicographically in $\rho$.

    (2) If $K$ is a $231$-pattern in $w$, then $P(K)\cap \operatorname{Inv}(w)$ is ordered lexicographically in $\rho$.
\end{lemma}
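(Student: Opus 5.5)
Fix a triple $K=\{a,b,c\}$ with $a<b<c$. For $\rho\in A(n,2;w)$, I will track only the relative order of $a,b,c$ along the path from $\operatorname{id}$ to $w$ that $\rho$ records, in the spirit of Remark~\ref{rem:trick}. The edges labelled by sets $L\notin P(K)$ do not change the relative order of $a,b,c$, so they can be discarded. What remains is a path in the Manin-Schechtman graph of $\mathfrak{S}_3$ on the letters $a,b,c$. It starts at $abc$ and ends at the pattern that $K$ forms in $w$. Each step swaps two letters of $K$ that are adjacent among $a,b,c$, with the smaller letter in front. A pair is swapped at most once, since every path from $\operatorname{id}$ to $w$ uses each edge of $\operatorname{Inv}(w)$ exactly once (Lemma~\ref{lem:subgraph}).

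For (1), the pattern $312$ means $w$ lists $c$ first, then $a$, then $b$. So $P(K)\cap\operatorname{Inv}(w)=\{(ac),(bc)\}$, while $(ab)$ is not an inversion. Along the reduced path the restricted word moves from $abc$ to $cab$ and never swaps $a$ with $b$. The first swap cannot involve $a$ and $c$, because they are not adjacent in $abc$. It therefore has to be $(bc)$, giving $acb$, and then $(ac)$ gives $cab$. So $(bc)$ precedes $(ac)$ in $\rho$, and this is the anti-lexicographic order on this two-element set, since $(ac)<(bc)$ lexicographically.

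For (2), the pattern $231$ means the restricted word goes from $abc$ to $bca$, and $P(K)\cap\operatorname{Inv}(w)=\{(ab),(ac)\}$. Here $(bc)$ is never swapped, so the first swap must be $(ab)$, giving $bac$, and then $(ac)$ gives $bca$. So $(ab)$ precedes $(ac)$, which is the lexicographic order.

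The main point to check is the reduction step. An edge $\operatorname{inv}_L$ with $L\notin P(K)$ transposes two adjacent positions, and at least one of the two letters lies outside $K$. Such a move cannot change the relative order of $a,b,c$. Each edge in $P(K)$ is a genuine adjacent transposition, so in the restricted word its two letters are also adjacent. Beyond this, the argument is just the short case analysis above.
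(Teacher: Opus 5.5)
Your proof is correct, but it argues differently from the paper's proof of this lemma.

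\textbf{The paper's route.} It extends $\rho$ to a maximal path from $\operatorname{id}$ to $w_0$ in $\Gamma$. It then invokes Theorem~\ref{th:ms}(3) to know that the whole packet $P(K)$ is ordered admissibly in that extension. The non-inversion of $K$, which is $(ab)$ for a $312$-pattern and $(bc)$ for a $231$-pattern, is appended after all of $\rho$. Being last, it forces the anti-lexicographic (resp.\ lexicographic) order.

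\textbf{Your route.} You never leave $\operatorname{Inv}(w)$. You project the path onto the three letters $a,b,c$, which is the trick of Remark~\ref{rem:trick} and the explicit argument of Lemma~\ref{lem:P3}. The order is then forced because only one adjacent pair of $abc$ is available for the first swap among $K$-letters.

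\textbf{Comparison.} The paper's version is a two-line reduction to the longest-element case, but it depends on the Manin--Schechtman admissibility theorem for $A(n,2)$. Your version is self-contained. It needs only two facts: Lemma~\ref{lem:subgraph} (the labels used are exactly $\operatorname{Inv}(w)$, each once), and the observation that a swap involving a letter outside $K$ does not change the relative order of $a,b,c$. It also makes transparent why the order is forced.
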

\begin{proof}

    We first examine a $312$-pattern. In this case, suppose $\{c,a,b\}$ is a $312$-pattern with $a<b<c$. Notice that $\rho$ can be extended to a maximal path $\rho'$ in $\Gamma$. Then $(ac),(bc)\in \operatorname{Inv}(w)$, and $(ab)$ appears after $\rho$ in $\rho'$. Since $(ab)$ is the smallest element in $P((abc))$ and appears last in $\rho'$, the order induced on $P((abc))$ must be anti-lexicographic, hence $(bc),(ac)$ must appear in this order in $\rho$. The other case can be shown similarly.
\end{proof}

In \cite{As19}, the author defined a partial order on the exapanded expression graph, whose vertices are all reduced expressions of a fixed permutation $w$, and the edges indicate the Coxeter relations. The conflated expression graph is the result of contracting all reduced expressions in the same commutation class to a single vertex. The proof of \cite[Theorem~2.8]{As19} implies the following formulation of the partial order:
\begin{definition}\label{def:partial}
Let $\leq$ be the transitive closure of the following relation: let $r_1$, $r_2$ be commutation classes of reduced expressions for $w$, $r_1 \leq r_2$  iff there exist $\rho_1 \in r_1$, $\rho_2 \in r_2$, and $\rho_2$ can be obtained from $\rho_1$ via  a braid move $s_i s_{i+1} s_i \to s_{i+1}s_i s_{i+1}$  for some $i$.
\end{definition}

Recall the equivalence relation defined on $A(n,k)$ in Section~\ref{sec:MS}: $\rho_1 \sim \rho_2$ if and only if $\rho_1$ and $\rho_2$ can be obtained from a sequence of exchanging commuting neighbors. Remark~\ref{rmk:MS} implies that equivalence classes in $A(n,2)$ are closed in $A(n,2;w)$, therefore we can let $B(n,2;w)$ be the set of equivalence classes in $A(n,2;w)$. The Manin-Schechtman partial order (cf. Theorem~\ref{th:ms}), defined on $B(n,2)$, also passes down to $B(n,2;w)$ via restriction, with the same rank function $|\operatorname{Inv}(\cdot)|$. Note here, $$\operatorname{Inv}(\rho)=\{K\in C(n,3)\mid P(K)\text{ is ordered anti-lexicographically in }\rho\}.$$
The following result is folklore (for example, see \cite{Eli16b} for the longest word). Here we give a proof for the general case.

\begin{lemma}\label{lem:same}

The Manin-Schechtman partial order is equivalent to Definition~\ref{def:partial}.

\end{lemma}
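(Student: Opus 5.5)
The plan is to show that the generating relations of the two orders coincide on $B(n,2;w)$. Both orders are transitive closures: the Manin-Schechtman order of the relation $r_1\le r_2$ iff $r_2=\operatorname{inv}_K(r_1)$ for some $K\in C(n,3)$ with $\operatorname{inv}_K$ defined on $r_1$, and Definition~\ref{def:partial} of the relation given by a single braid move $s_is_{i+1}s_i\to s_{i+1}s_is_{i+1}$ between representatives. It therefore suffices to prove that these two generating relations agree. The argument translates between the reduced-word language and the $A(n,2;w)$ language through the dictionary of Remark~\ref{rmk:MS}. A reduced expression $s_{i_1}\cdots s_{i_N}$ of $w$ corresponds to the path $\operatorname{id}=y_0\to y_1\to\cdots\to y_N=w$ in $\Gamma_w$, where $y_t=y_{t-1}s_{i_t}$. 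The $t$-th letter is recorded as the pair $K_t=\{a,b\}$ of values swapped in positions $i_t,i_t+1$, and this pair is an inversion of $w$ by Lemma~\ref{lem:subgraph}. Commutations of letters correspond to swapping commuting neighbors $K_t,K_{t+1}$ (disjoint pairs), so commutation classes correspond exactly to classes in $B(n,2;w)$.

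\textbf{Braid move to $\operatorname{inv}_K$.} Suppose $\rho_1$ contains a factor $s_is_{i+1}s_i$ at steps $t,t+1,t+2$. Write $y_{t-1}$ as $\cdots abc\cdots$ in positions $i,i+1,i+2$. Since the path goes up in length, we have $a<b<c$. Tracking the three swaps as in (\ref{eq:b1}) and (\ref{eq:b2}) shows that the consecutive labels are $(ab)(ac)(bc)$, the lexicographic order on $P(\{a,b,c\})$. Replacing the factor by $s_{i+1}s_is_{i+1}$ produces $(bc)(ac)(ab)$. So $P(K)$ with $K=(abc)$ is a chain in $\rho_1$, ordered lexicographically, and the braid move is exactly $\operatorname{inv}_K(\rho_1)$. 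Hence a braid move in the upward direction of Definition~\ref{def:partial} is an $\operatorname{inv}_K$ step.

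\textbf{$\operatorname{inv}_K$ to braid move.} This is the direction I expect to be the main obstacle. Suppose $\operatorname{inv}_K$ is defined on $r_1$. Then some representative $\rho\in r_1$ has $P(K)$, $K=(abc)$, as a chain ordered lexicographically. This means the three labels occur consecutively as $(ab)(ac)(bc)$ at steps $t,t+1,t+2$. I must show that these three steps are simple transpositions $s_i,s_{i+1},s_i$ acting on adjacent positions. The step labeled $(ab)$ swaps $a$ and $b$, which are adjacent in $y_{t-1}$. The next step swaps $a$ and $c$, which must then be adjacent in $y_t$, and $a$ has just moved right, so $c$ sits immediately to the right of $a$. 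Thus $y_{t-1}=\cdots abc\cdots$ in consecutive positions, and the three steps are $s_i,s_{i+1},s_i$ by the computation above. The configuration $c\,b\,a$ appears in the intermediate permutation with $c$ left of $a$, which pins down the direction. So the consecutive triple is a genuine braid factor, and $\operatorname{inv}_K(\rho)$ is the braid-moved word. Finally, $\operatorname{inv}_K$ is well defined on classes (Section~\ref{sec:MS}) and commutation classes match, so the relation on $B(n,2;w)$ is the same. Taking transitive closures gives the lemma. As a consistency check, the rank function $|\operatorname{Inv}(\cdot)|$ increases by one under each generating step, matching the co-inversion rank used in \cite{As19}.
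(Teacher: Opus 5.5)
Your proposal is correct and follows the same route as the paper. Both identify an upward braid move $s_is_{i+1}s_i\to s_{i+1}s_is_{i+1}$ with $\operatorname{inv}_K$ applied to a lexicographically ordered chain $P(K)$, $K=(abc)$, via the local computations (\ref{eq:b1}) and (\ref{eq:b2}), so the generating relations, and hence their transitive closures, coincide. You additionally verify two points the paper leaves implicit: the converse (a lexicographic chain $(ab)(ac)(bc)$ in a path forces a consecutive factor $s_is_{i+1}s_i$), and that commutation classes match; your closing remark about the co-inversion rank is not needed for the argument.
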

\begin{proof}
    One simply observes that the expression $\cdots s_{i+1}s_is_{i+1}\cdots$ appears in (\ref{eq:b1}), and the packet $P(K)$ for $K=(abc)$ is ordered anti-lexicographically in (\ref{A1}). On the other hand,   the expression $\cdots s_i s_{i+1} s_i \cdots $ appears in (\ref{eq:b2}), and the packet $P(K)$ for $K=(abc)$ is ordered lexicographically in (\ref{A2}). In the Manin-Schechtman order, (\ref{A1}) is the result of applying $\operatorname{inv}_K$ to $(\ref{A2})$, therefore the order goes up, which is consistent with the Assaf order, if we compare (\ref{eq:b1}) against  (\ref{eq:b2}).
\end{proof}



Fix a permutation $w\in \mathfrak{S}_n$. Let $\rho_{\operatorname{max}}\in A(n,2;w)$ be the order defined by the following algorithm, and we will call it the Manin-Schechtman maximal word, or MSMax word for short.

\begin{algorithm}[ht]
    \begin{algorithmic}[1]
    \Procedure{MSMax}{$w$}
    \State $v \gets 123\cdots n$
    \State $\rho \gets ()$
    \State $N=n$
    \While {$v \neq w$}
    \State $k \gets \max\{j \mid  (v_j,v_{j+1})\in \operatorname{Inv}(w), j< N \}$
    \State $N \gets k$
\While{$(v_k,v_{k+1})\in \operatorname{Inv}(w) \land k<n$ }
\State $\rho \gets \rho,(v_k,v_{k+1})$
\State $v \gets \operatorname{inv}_{(v_k,v_{k+1})}(v)$
\State $k \gets k+1$
\EndWhile
    \EndWhile
    \State \textbf{return} $\rho$
    \EndProcedure
    \end{algorithmic}
    \caption{Manin-Schechtman maximal word}
\end{algorithm}

\begin{example}\label{ex:main}
We use the permutation $w=41758236$ to compare with \cite[Example D]{As19}. We highlight the newly added sets in each loop, and also the position $N$ at the end of each loop.

[loop 1:] $N=6$, $\rho=(67)(68)$, $v=12345\underline{7}86$;

[loop 2:] $N=5$, $\rho=(67)(68)\boldsymbol{(57)}$, $v=1234\underline{7}586$; 

[loop 3:] $N=3$,  $\rho=(67)(68)(57)\boldsymbol{(34)(37)(35)(38)}$, $v=12\underline{4}75836$;

[loop 4:] $N=2$, $\rho=(67)(68)(57)(34)(37)(35)(38)\boldsymbol{(24)(27)(25)(28)}$, $v=1\underline{4}758236$;

[loop 5:] $N=1$, $\rho=(67)(68)(57)(34)(37)(35)(38)(24)(27)(25)(28)\boldsymbol{(14)}$, $v=\underline{4}1758236$.

Notice that this corresponds to the reduced expression $s_1s_5s_4s_3s_2s_6s_5s_4s_3s_5s_7s_6$. One may use \cite[Corollary 2.10]{As19} to check that this is in the same commutation class as the super-Yamanouchi word, a result we will prove in general.
\end{example}

\begin{lemma}\label{lem:good}
The algorithm is well-defined and eventually stops.
\end{lemma}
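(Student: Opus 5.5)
Throughout, $v$ is the current permutation in one-line notation. The plan is to maintain an invariant showing that every step of MSMax is legal and that $v$ climbs in $\Gamma_w$. The invariant is:
\begin{center}
(I) $v\le w$ in $\Gamma_w$, i.e.\ $\operatorname{Inv}(v)\subseteq \operatorname{Inv}(w)$ and $v$ is a vertex of $\Gamma_w$.
\end{center}
Every appended set $(v_k,v_{k+1})$ with $v_k<v_{k+1}$ and $(v_k,v_{k+1})\in\operatorname{Inv}(w)$ swaps an adjacent non-inverted pair into an inversion of $w$. Applying $\operatorname{inv}_{(v_k,v_{k+1})}$ is then defined, and the resulting $v'$ satisfies $\operatorname{Inv}(v')=\operatorname{Inv}(v)\cup\{(v_k,v_{k+1})\}\subseteq\operatorname{Inv}(w)$. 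By Lemma~\ref{lem:subgraph} (the Bruhat/weak interval structure of $\Gamma_w$) this is an edge of $\Gamma_w$, and (I) is preserved. Since $|\operatorname{Inv}(v)|$ increases by one at each inner step and is bounded by $|\operatorname{Inv}(w)|$, only finitely many inner steps occur.

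Two points need care. First, the test $(v_k,v_{k+1})\in\operatorname{Inv}(w)$ must guarantee $v_k<v_{k+1}$. Here I will argue that, because the algorithm only ever creates inversions, an adjacent pair already inverted in $v$ was put there by an earlier swap. The inner loop starting at $k$ carries a single letter $v_k$ rightward past letters larger than it. I will try to show that the letters to the right of position $N$ are frozen in their final relative order. Then any pair tested by the inner loop, which starts at $k<N$, involves the moving letter and a letter it has not yet passed, so the pair is not yet inverted.

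Second, and this is the main obstacle, whenever $v\ne w$ the maximum in line 6 must be taken over a nonempty set. In other words, we need some $j<N$ with $(v_j,v_{j+1})\in\operatorname{Inv}(w)\setminus\operatorname{Inv}(v)$; otherwise the algorithm is not well defined. The standard fact is that if $v<w$ in weak order (which I expect follows from (I), since $\operatorname{Inv}(v)\subseteq\operatorname{Inv}(w)$ characterizes weak order) then some adjacent ascent of $v$ is an inversion of $w$, namely a descent of $v^{-1}w$ lifted. The real work is to show that such an ascent occurs strictly left of $N$. To do this I will prove the second invariant:
\begin{center}
(II) after each outer loop, $v$ and $w$ agree in the relative order of all letters occupying positions $\ge N$, and no pair involving only those letters is in $\operatorname{Inv}(w)\setminus\operatorname{Inv}(v)$.
\end{center}

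I will prove (II) by induction on the outer loop. The key input is that $k$ was chosen as the largest ascent below the old $N$ lying in $\operatorname{Inv}(w)$, so every ascent between $k$ and the old $N$ is not an inversion of $w$. The inner loop pushes $v_k$ right exactly until it meets a letter it should not pass. Then $v_k$ sits correctly relative to everything to its right, and this uses Lemma~\ref{lem:P3} and Lemma~\ref{lem:describe} to exclude forbidden $312/231$ configurations. Given (II), any missing inversion of $w$ must involve a letter left of $N$. Taking an adjacent witness as above shows the set in line 6 is nonempty. Strict decrease of $N$ at each outer loop then gives a second, independent proof of termination.

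Finally, when the loop exits we have $v=w$ by the loop condition, so $\rho$ lists exactly the edges of a path from $\operatorname{id}$ to $w$ in $\Gamma_w$, and hence $\rho\in A(n,2;w)$. I expect verifying (II), and especially the step where the inner loop stops at the right place, to be where the case analysis concentrates.
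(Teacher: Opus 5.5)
Your invariant (II) is the invariant the paper's proof maintains: the letters in positions $\ge N$ occur in $w$ in the same relative order. Your surrounding bookkeeping is also sound. This covers invariant (I), and it covers nonemptiness of the set in line 6: an adjacent ascent of $v$ lying in $\operatorname{Inv}(w)$ must, by (II), sit at a position $j<N$. The gap is that the inductive step for (II), which carries all the content of the lemma, is only announced. The tools you name for it also do not apply. Lemmas~\ref{lem:P3} and~\ref{lem:describe} describe how packets are ordered inside a complete word of $A(n,2;w)$, and they say nothing about where the inner loop stops. No $312$/$231$ case analysis is needed.

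The missing argument is two elementary observations plus transitivity. Let $N_0$ be the value of $N$ before the loop.

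(a) Every swap in a loop occurs at a position $\ge$ the newly chosen $N$, and $N$ strictly decreases. So positions $<N$ are never touched, and $v_i=i$ there. Hence the letter moved in the new loop is $v_k=k$, which is smaller than every letter to its right. Also, every adjacent pair $(v_j,v_{j+1})$ with $j<N_0$ is an ascent. This settles your first point of care directly.

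(b) By maximality of $k$, for $k<j<N_0$ the ascent $(v_j,v_{j+1})$ is not in $\operatorname{Inv}(w)$, so $v_j$ precedes $v_{j+1}$ in $w$. Combined with the inductive hypothesis, the whole tail $v_{k+1}\cdots v_n$ is therefore a subsequence of $w$.

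Since $k$ is smaller than every tail letter $y$, we have $\{k,y\}\in\operatorname{Inv}(w)$ if and only if $y$ precedes $k$ in $w$. These letters form an initial segment $v_{k+1},\dots,v_j$ of the tail, which is nonempty by the choice of $k$. So the inner loop carries $k$ past exactly $v_{k+1},\dots,v_j$ and stops before $v_{j+1}$, or at position $n$. The new tail $v_{k+1}\cdots v_j\,k\,v_{j+1}\cdots v_n$ again occurs in $w$ in order. This is the paper's argument, and with it inserted your proof is complete.
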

\begin{proof}
This follows from the fact: at the end of each loop, the entries $v_{N}, v_{N+1}, \dots, v_n$ retain their relative positions in $w$. In other words, $v_i$ appears before $v_{i+1}$  in $w$, for all $N\leq i \leq n-1$. Moreover, $v_i=i$ for $i<N$, and $w_j\geq N$ for $j\geq N$.

We prove this by induction, and the base case is when $N=n$ initially, which is automatically true. Now we prove the induction step: i.e. assume that the above claim holds at the end of the previous loop, we would like to show that it holds at the end of the next loop also. Suppose $N_0$ is the $N$-value at the end of the previous loop and $N_1$ is the newly assigned $N$-value. Then at the beginning of this loop, the sequence $(v_{N_1+1},\cdots, v_{N_0-1})=(N_1+1,\cdots, N_0-1)$ must also retain their relative positions as in $w$, otherwise there would be an inversion among these entries, creating an inversion after location $N_ 1$ and violating the maximal property.

Since $v_{N_0-1}$ and $v_{N_0}$ must also retain their relative position in $w$ for the same reason, we have shown that $(v_{N_1+1},\cdots,v_n)$ must all retain their relative positions in $w$ based on the inductive hypothesis.

The loop simply performs the following task: suppose $v_{N_1}$ appears after $v_j$ and before $v_{j+1}$ in $w$, for some $N_1<j\leq  n$. Since $v_{N_1}=N-1$ will form an inversion with exactly $v_{N_1+1},\dots,v_j$, the loop will move $v_{N_1}$ past exactly those elements in $v$ and stop between $v_j$ and $v_{j+1}$. Hence the relative positions among $v_j$, $v_{N_1}$ and $v_{j+1}$ also get preserved, and hence the claim holds at the end of this loop also.
\end{proof}

\begin{lemma}
The Manin-Schechtman maximal word $\rho_{\operatorname{max}}$ admits the inversion number $|\operatorname{Inv}(\rho_{\operatorname{max}})|=|\I(w)|$.
\end{lemma}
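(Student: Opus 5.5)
The plan is to show that every $321$-pattern of $w$ lies in $\operatorname{Inv}(\rho_{\operatorname{max}})$. The reverse containment is automatic: if $P(K)$ is ordered anti-lexicographically in $\rho_{\operatorname{max}}$, then all three elements of $P(K)$ occur in $\rho_{\operatorname{max}}$, so $P(K)\subset \operatorname{Inv}(w)$. By Lemma~\ref{lem:321} this gives $K\in\I(w)$. Hence $\operatorname{Inv}(\rho_{\operatorname{max}})\subseteq \I(w)$, and it suffices to prove that for every $321$-pattern $(a,b,c)$ with $a<b<c$ the sets appear in $\rho_{\operatorname{max}}$ in the order $(bc),(ac),(ab)$.

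First I would make the loop structure of the algorithm explicit, using the invariants established in the proof of Lemma~\ref{lem:good}.
\begin{itemize}
\item At the start of the loop with new value $N_1$, we have $v_i=i$ for $i<N$.
\item At that moment the entries $v_{N_1+1},\dots,v_n$ appear in the same relative order as in $w$.
\item During the loop, the entry $v_{N_1}=N_1$ travels to the right past exactly those entries with which it forms an inversion of $w$. Each set appended in this loop is therefore of the form $(N_1,y)$ with $y>N_1$.
\end{itemize}
Since $N$ strictly decreases from loop to loop, the following holds. Every inversion $(x,y)$ with $x<y$ is appended during the unique loop in which the smaller entry $x$ is moved, and these loops occur in decreasing order of $x$. (Each inversion is appended exactly once because $\rho_{\operatorname{max}}\in A(n,2;w)$.)

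Now fix a $321$-pattern $(a,b,c)$ with $a<b<c$, so that $c$ precedes $b$ and $b$ precedes $a$ in $w$.
\begin{itemize}
\item The set $(bc)$ is appended in the loop moving $b$.
\item The sets $(ac)$ and $(ab)$ are both appended in the later loop moving $a$, since $a<b$.
\end{itemize}
So $(bc)$ comes first. In the loop moving $a$, both $b$ and $c$ lie among $v_{a+1},\dots,v_n$, and by the invariant they sit in their $w$-relative order, namely $c$ before $b$. As $a$ moves rightward one step at a time, it therefore swaps with $c$ before $b$, so $(ac)$ is appended before $(ab)$. This yields the anti-lexicographic order $(bc),(ac),(ab)$ on $P((abc))$, hence $(abc)\in\operatorname{Inv}(\rho_{\operatorname{max}})$, and the equality $|\operatorname{Inv}(\rho_{\operatorname{max}})|=|\I(w)|$ follows.

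The main obstacle is the bookkeeping in the second paragraph. The proof of Lemma~\ref{lem:good} states its invariants at the end of each loop, whereas here I need them at the beginning of the loop that moves $a$. I also need the identification that the entry being moved in the loop with parameter $N_1$ has value $N_1$. I would recover both from the proof of Lemma~\ref{lem:good}: the segment $(v_{N_1+1},\dots,v_{N_0-1})$ is already in $w$-relative order at the start of the loop, and $v_i=i$ for $i<N$. Combining these gives the invariant at the start of the loop and the ordering of loops by value.
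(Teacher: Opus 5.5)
Your proof is correct and shares the paper's skeleton. Each loop of Algorithm~1 moves the letter $N$, whose position has not been touched before, so $v_N=N$. Every set appended in that loop has the form $(N,y)$ with $y>N$, and $N$ strictly decreases. Hence for a $321$-pattern $(abc)$ the set $(bc)$ precedes both $(ac)$ and $(ab)$.

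The difference is in the last step. The paper never decides the order of $(ac)$ and $(ab)$ inside the loop that moves $a$. It invokes Lemma~\ref{lem:P3}: $P((abc))$ is ordered either $(ab)(ac)(bc)$ or $(bc)(ac)(ab)$. The former is impossible because it places $(ac)$ before $(bc)$, contradicting the weakly decreasing first letters.

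You instead compute the within-loop order directly, from the invariant that $v_{a+1},\dots,v_n$ sit in $w$-relative order at the start of that loop. This is valid and gives the full order on $P((abc))$ without appealing to admissibility. But it is exactly the bookkeeping you flagged as the main obstacle. With Lemma~\ref{lem:P3} you would need only two easy facts, not the relative-order invariant from the proof of Lemma~\ref{lem:good}:
\begin{itemize}
\item positions below the current $N$ are never touched;
\item the moved entry is smaller than everything to its right.
\end{itemize}

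Your explicit check that $\operatorname{Inv}(\rho_{\operatorname{max}})\subseteq\I(w)$ via Lemma~\ref{lem:321} is a step the paper leaves implicit. It is what turns $\I(w)\subseteq\operatorname{Inv}(\rho_{\operatorname{max}})$ into the stated equality.
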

\begin{proof}
    Observe that the sets appearing in $\rho_{\operatorname{max}}$ can be divided into segments $\rho^{(1)},\rho^{(2)},\dots,\rho^{(t)}$, where $\rho^{(i)}$ is the sequence of sets $K$ added in loop $i$. In particular, each set in $\rho^{(i)}$ will be in the form of $(N,M)$ for the fixed $N$-value in this loop. This $N$ is precisely the $v_N$ that gets repeatedly moved in the proof of Lemma~\ref{lem:good}. Since the $N$-value would decrease after each loop, the first letter of each set $K$ which appears in $\rho$ is weakly decreasing.

     Hence if there exists $K\in\mathcal{I}$, $K=(abc)$, then $P(K)$ appears either in the sequence of $(ab)(ac)(bc)$ or  $(bc)(ac)(ab)$. But the former will have the first letter appearing in an increasing order, hence creating a contradiction. Therefore, $P(K)$ must appear in the order of  $(bc)(ac)(ab)$, which means that $K\in \operatorname{Inv}(\rho_{\operatorname{max}})$ for all $K\in \I(w)$. 
\end{proof}

In \cite{As19}, the author gave an algorithm for producing the unique super-Yamanouchi word, which is the unique maximal element with respect to the co-inversion number. In the definition below, we use the conventional way of writing a reduced expression in terms of simple transpositions, and only write the subscript $i$ for $s_i$. In other words, $s_1s_2s_1s_3$ becomes $1213$. \footnote{We note that the Manin-Schechtman convention reads from left to right and simple transitions read from right to left, in terms of the order of operation. This should be kept in mind when comparing conventions.}
\begin{definition}
    A reduced expression $\rho$ is super-Yamanouchi if and only if it has a decomposition $\rho=(\rho^{(1)}\mid\rho^{(2)}\mid\cdots\mid\rho^{(t)})$ such that each $\rho^{(i)}$ is an increasing interval, and the beginning of $\rho^{(i)}$ is larger than the beginning of $\rho^{(i+1)}$, $1\leq i\leq t-1$.
\end{definition}
The algorithm in \cite{As19} becomes the following, once we use the Manin-Schechtman convention for a reduced expression.

\begin{algorithm}[ht]
    \begin{algorithmic}[1]
    \Procedure{Super}{$w$}
    \State $v \gets w$
    \State $\pi \gets ()$
    \While {$\ell (v)>0$} 
   \State $j \gets \operatorname{max}\{k \mid v_k>v_{k+1}\}$
   \While {$v_j > v_{j+1} \land j<n$}
   \State $\pi \gets (v_{j+1},v_j), \pi $
   \State $v \gets \operatorname{inv}_{(v_{j+1},v_j)}(v)$
   \State $j \gets j+1 $ 
   \EndWhile
    \EndWhile
    \State \textbf{return} $\pi$
    \EndProcedure
    \end{algorithmic}
    \caption{Super-Yamanouchi word}
\end{algorithm}
\begin{example}\label{ex:2}
Continuing from Example~\ref{ex:main}, we use the permutation $w=41758236$. We also highlight the letter that has been moved during each loop.

[loop 1:] $j=5$, $\pi=(68)(38)(28)$, $v=4175236\underline{8}$;

[loop 2:] $j=4$, $\pi=\boldsymbol{(35)(25)}(68)(38)(28)$, $v=41723\underline{5}68$; 

[loop 3:] $j=3$,  $\pi=\boldsymbol{(67)(57)(37)(27)}(35)(25)(68)(38)(28)$, $v=412356\underline{7}8$;

[loop 4:] $j=1$, $\pi=\boldsymbol{(34)(24)(14)}(67)(57)(37)(27)(35)(25)(68)(38)(28)$, $v=123\underline{4}5678$.

One may check manually that this is the same equivalent class as the Manin-Schechtman maximal word computed in Example~\ref{ex:main}.
\end{example}

\begin{lemma}
    The super-Yamanouchi word $\pi_{\operatorname{max}}$ also has the inversion number $\operatorname{Inv}(\pi_{\operatorname{max}})=|\I(w)|$.
\end{lemma}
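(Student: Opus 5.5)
We mirror the proof given for the Manin-Schechtman maximal word. The plan is to show that every $K=(abc)\in\I(w)$ lies in $\operatorname{Inv}(\pi_{\operatorname{max}})$, meaning $P(K)$ appears in $\pi_{\operatorname{max}}$ in the order $(bc)(ac)(ab)$. Since only $K\in\I(w)$ can have all of $P(K)$ inside $\operatorname{Inv}(w)$ (Lemma~\ref{lem:321}), this gives $|\operatorname{Inv}(\pi_{\operatorname{max}})|=|\I(w)|$. By Lemma~\ref{lem:P3}, $P(K)$ appears either as $(ab)(ac)(bc)$ or as $(bc)(ac)(ab)$. So it suffices to exhibit a monotonicity property of $\pi_{\operatorname{max}}$ that rules out the lexicographic order.

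First, I would split $\pi_{\operatorname{max}}=\pi^{(1)}\pi^{(2)}\cdots\pi^{(t)}$, where $\pi^{(i)}$ is the block prepended in the $i$-th outer loop, read off in the final word. In each loop, a single letter $m=v_j$ is moved rightward past the letters smaller than it until it reaches its correct place. Hence every set added in that loop has the form $(x,m)$ with fixed second entry $m$. In Example~\ref{ex:2} these second entries are $8,5,7,4$ in the order of the loops, which is not monotone. The invariant should therefore be stated in the order of the final word (loops reversed), and it should concern the pairs themselves rather than a single coordinate.

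The key step, which I expect to be the main obstacle, is to establish the right invariant. I would prove by induction on loops that at the end of each loop, the suffix of $v$ to the right of the moved letter's final position is increasing. More strongly, the letter $m$ is moved past an increasing run of smaller letters, and it lands at the end of a sorted suffix. Consequently, within one block the pairs are $(x_1,m),(x_2,m),\dots$ with $x_1<x_2<\cdots$ in execution order; since each new pair is prepended, their order in $\pi$ is decreasing in $x$.

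Now take $K=(abc)\in\I(w)$ and locate $(ab),(ac),(bc)$. The pairs $(ac)$ and $(bc)$ share the larger letter $c$, so they are created in the same loop, namely the one that moves $c$. Because $b>a$, the prepending rule places $(bc)$ before $(ac)$ in $\pi_{\operatorname{max}}$. This is already incompatible with the lexicographic order $(ab)(ac)(bc)$, which requires $(ac)$ before $(bc)$. So by Lemma~\ref{lem:P3} the packet must appear as $(bc)(ac)(ab)$, and $K\in\operatorname{Inv}(\pi_{\operatorname{max}})$.

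Two points need care. The first is that in $v$ both $a$ and $b$ actually lie to the right of $c$ when $c$ is moved, so that they are swapped with $c$ in that same loop. This holds because $c$ precedes $a$ and $b$ in $w$, and any letter larger than $c$ that was moved earlier went to the right end of the sorted suffix and does not disturb the relative order of smaller letters. The second is that the execution order within a loop increases in $x$, which comes from the sorted-suffix invariant. Both reduce to the induction above, in the same style as the proof of Lemma~\ref{lem:good}.
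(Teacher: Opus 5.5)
Your proof is correct, but it rules out the lexicographic order using a different pair from $P(K)$ than the paper does.

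The paper's route is as follows. Of the three pairs, $(ab)$ is the first one the algorithm produces: while $b$ still precedes $a$ in $v$, there is a descent to the right of $c$, so $c$ cannot occupy the last descent. After prepending, $(ab)$ is therefore rightmost among $P(K)$ in $\pi_{\operatorname{max}}$, which contradicts $(ab)(ac)(bc)$.

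Your route compares $(ac)$ and $(bc)$ instead. Both arise in the one loop that moves $c$, and $(ac)$ is produced first. Hence $(bc)$ precedes $(ac)$ in $\pi_{\operatorname{max}}$.

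Both routes close with Lemma~\ref{lem:P3}. The paper's needs only the single last-descent observation. Yours needs a little more bookkeeping, but it gives the block description (pairs $(x,m)$ with $x$ decreasing from left to right). Your picture in fact already contains the paper's observation. When $c$ moves, $a$ precedes $b$ in the increasing suffix, so $(ab)$ has already been produced. Together these give the full order $(bc)(ac)(ab)$ without appealing to Lemma~\ref{lem:P3} at all.

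Two small corrections.

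\begin{enumerate}
\item The step you flag as the main obstacle needs no induction. $v_{j+1}<\cdots<v_n$ holds by the very choice of $j$ as the last descent.
\item Your justification that $a,b$ stay to the right of $c$ is misstated. A previously moved letter need not go to the right end of the sorted suffix: for $w=321546$ the first loop moves $5$ only past $4$, and it stops before $6$. The claim itself holds for a simpler reason. Two letters change relative order only when the larger one is the moving letter. So $a,b$ remain to the right of $c$ until $c$ moves, and at that moment both lie in the increasing suffix and are passed in the same loop.
\end{enumerate}
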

\begin{proof}
    Suppose $K=\{c,b,a\}$ is a $321$-pattern in $w$, with $a<b<c$. By the algorithm, the inversion $(a,b)$ is dealt with first, and therefore sits at the rightmost position among $P(K)$ in $\pi_{\operatorname{max}}$. Therefore any $K\in \I(w)$ is ordered anti-lexicographically in $\pi_{\operatorname{max}}$, the claim then follows.
\end{proof}

\begin{proposition}
    Suppose $r\in B(n,2;w)$ and $|\operatorname{Inv}(r)|=|\I(w)|$, then $r$ is the unique maximal element with respect to the rank function $|\operatorname{Inv}(\cdot)|$.
\end{proposition}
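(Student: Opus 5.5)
The plan is to prove two things: that $|\I(w)|$ is the largest value the rank function can take on $B(n,2;w)$, and that an element attaining it is pinned down by its inversion set, so it is unique.

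\textbf{Upper bound.} I first bound $|\operatorname{Inv}(r)|$. By definition, $\operatorname{Inv}(r)$ consists of those $K\in C(n,3)$ whose packet $P(K)$ is ordered anti-lexicographically in a representative $\rho$. On $B(n,2;w)$ the packet of $K$ is only meaningful through $P(K)\cap\operatorname{Inv}(w)$. By Lemma~\ref{lem:321}, the whole packet lies in $\operatorname{Inv}(w)$ exactly when $K\in\I(w)$. For the remaining triples, at most two elements of the packet occur, and Lemma~\ref{lem:describe} already fixes their relative order: it is forced by whether $K$ is a $312$ or $231$ pattern, or is vacuous otherwise. So these triples contribute the same amount, possibly zero, to every class. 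I will therefore use the convention that the rank counts only $K\in\I(w)$, which is the reading under which the maximal word has rank exactly $|\I(w)|$. This gives $\operatorname{Inv}(r)\subseteq \I(w)$, hence $|\operatorname{Inv}(r)|\le|\I(w)|$ for every $r$. Consequently any $r$ with $|\operatorname{Inv}(r)|=|\I(w)|$ has maximal rank and satisfies $\operatorname{Inv}(r)=\I(w)$.

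\textbf{Uniqueness.} Suppose $r,r'$ both have $\operatorname{Inv}=\I(w)$. I will show that representatives $\rho\in r$ and $\rho'\in r'$ are commutation equivalent. The key claim is an analogue of Theorem~\ref{th:ms}(4): an element of $B(n,2;w)$ is determined by its inversion set. I would take the quickest route, in the spirit of Remark~\ref{rem:trick}. Two total orders on $\operatorname{Inv}(w)$ that lie in $A(n,2;w)$ differ by commutation moves if and only if they induce the same relative order on every non-commuting pair, i.e. on every pair of $2$-sets sharing a letter. This is the standard fact that commutation classes are linear extensions of a single heap.

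Any two non-commuting elements of $\operatorname{Inv}(w)$ lie in a common packet $P(K)$ with $K\in C(n,3)$. Their relative order is then determined as follows.
\begin{itemize}
\item If $K\in\I(w)$, Lemma~\ref{lem:P3} says the packet is ordered lexicographically or anti-lexicographically, and which one is recorded by whether $K\in\operatorname{Inv}$.
\item If $K\notin\I(w)$, the order is forced by Lemma~\ref{lem:describe}, independently of $r$.
\end{itemize}
Hence $\rho$ and $\rho'$ agree on all non-commuting pairs, so $r=r'$.

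\textbf{Main obstacle.} The step I expect to be hardest is justifying the heap fact: that agreement on all non-commuting pairs implies commutation equivalence. I would argue by induction on length. Let $L$ be the first letter of $\rho'$. Every element preceding $L$ in $\rho$ must commute with $L$, since otherwise $\rho'$ would order that pair differently. So $L$ can be commuted to the front of $\rho$. Remove $L$ from both orders and pass to the permutation $\operatorname{inv}_L(\operatorname{id})$ and the corresponding shorter word, then recurse.

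Finally, the maximal word $\rho_{\operatorname{max}}$ computed earlier attains $|\I(w)|$, so such an $r$ exists. It is the unique element of top rank.
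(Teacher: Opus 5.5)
Your proposal is correct and takes essentially the same route as the paper: maximality comes from $\operatorname{Inv}(r)\subseteq \I(w)$ via Lemma~\ref{lem:321}, and uniqueness comes from showing that two such orders agree on every non-commuting pair, using Lemma~\ref{lem:P3} when $K\in\I(w)$ and Lemma~\ref{lem:describe} when $K$ is a $312$- or $231$-pattern. Two points the paper leaves implicit are filled in by your proposal: the induction showing that agreement on non-commuting pairs implies commutation equivalence, and the convention that $\operatorname{Inv}$ only counts $K\in\I(w)$.
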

\begin{proof}
    The maximal property comes from Lemma~\ref{lem:321}, here we show the uniqueness. Let $\rho_1,\rho_2\in A(n,2;w)$ with the property that $|\operatorname{Inv}(\rho_1)|=|\operatorname{Inv}(\rho_2)|=|\I(w)|$, then we claim that they are in the same commutation class. We only need to compare the relative positions of sets which don't commute, i.e. sets which belong to a common packet. Suppose a $K_1$ and $K_2$ are two such sets, $|K_1\cup K_2|=3$, $K=K_1\cup K_2$, then we discussed by cases.

    Suppose $K$ is a $321$-pattern in $w$, then by assumption, $K\in \I(w) =\operatorname{Inv}(\rho_1)=\operatorname{Inv}(\rho_2)$. Therefore, all elements of $P(K)$  are ordered anti-lexicographically in both $\rho_1$ and $\rho_2$.

    A $123$-pattern $L$ will have its packet $P(L)$ disjoint with $\operatorname{Inv}(w)$, hence $K$ cannot be a $123$-pattern.  Similarly, a $132$- or $213$-pattern $L$ will have its packet $P(L)$ intersecting $\operatorname{Inv}(w)$ with only one element, hence $K$ cannot be either. The only cases left are discussed in Lemma~\ref{lem:describe}, for which the relative positions between $K_1$ and $K_2$ are already determined. Our claim follows. 
\end{proof}
\begin{corollary}
    Both $\rho_{\operatorname{max}}$ and $\pi_{\operatorname{max}}$ lie in the same commutation class $r_{\operatorname{max}}$, and $r_{\operatorname{max}}$ is maximal with respect to the rank function $|\operatorname{Inv}(\cdot)|$.
\end{corollary}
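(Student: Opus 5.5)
The corollary should follow almost immediately from the preceding three results, and I will assemble them in this order. First, the two lemmas above show that $|\operatorname{Inv}(\rho_{\operatorname{max}})|=|\I(w)|$ and $|\operatorname{Inv}(\pi_{\operatorname{max}})|=|\I(w)|$. Before applying the proposition, I need both words to be genuine elements of $A(n,2;w)$, i.e.\ to correspond to maximal paths in $\Gamma_w$. For $\rho_{\operatorname{max}}$ this is Lemma~\ref{lem:good}: the algorithm is well defined, and each step swaps an adjacent pair lying in $\operatorname{Inv}(w)$, so the resulting path is a maximal path from $\operatorname{id}$ to $w$ by Lemma~\ref{lem:subgraph}. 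For $\pi_{\operatorname{max}}$ this is the translation of Assaf's algorithm. Each step sorts an adjacent descent of $v$, so reading the sequence in reverse gives a path from $\operatorname{id}$ to $w$ through adjacent transpositions that each add an inversion of $w$. I will note that the prepending of sets in Algorithm 2 exactly accounts for this reversal.

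Second, I apply the proposition to $r_1=[\rho_{\operatorname{max}}]$ and $r_2=[\pi_{\operatorname{max}}]$. The uniqueness argument in its proof shows directly that any two elements of $A(n,2;w)$ with inversion number $|\I(w)|$ order every pair of non-commuting sets the same way. I will make explicit one implicit step: if two total orders on $\operatorname{Inv}(w)$, both coming from reduced expressions, agree on all non-commuting pairs, then they are related by a sequence of swaps of adjacent commuting elements. This is the standard fact that linear extensions of the same partial order (here generated by the non-commuting pairs) are connected by adjacent transpositions of incomparable neighbours. So $\rho_{\operatorname{max}}\sim\pi_{\operatorname{max}}$, and they lie in a common class, which I call $r_{\operatorname{max}}$.

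Third, for maximality I use Lemma~\ref{lem:321} and Lemma~\ref{lem:P3}. Any $K\in\operatorname{Inv}(r)$ must have $P(K)\cap\operatorname{Inv}(w)$ ordered anti-lexicographically with at least two elements, in a way that differs from the forced orders. By Lemma~\ref{lem:describe} and the case analysis in the proposition, only $321$-patterns have a free choice, so $\operatorname{Inv}(r)\subseteq\I(w)$ for every $r$. Hence $|\operatorname{Inv}(r)|\le|\I(w)|=|\operatorname{Inv}(r_{\operatorname{max}})|$, which is the claimed maximality.

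The only real subtlety is the third step. $\operatorname{Inv}(r)$ is defined by $P(K)$ being anti-lexicographic, yet for a $312$-pattern the two present sets are forced anti-lexicographic. I will therefore fix the convention that $\operatorname{Inv}$ is restricted to $K$ with $P(K)\subset\operatorname{Inv}(w)$, i.e.\ to $\I(w)$, as the proposition implicitly does. Under that reading the bound is immediate. The remaining steps are bookkeeping.
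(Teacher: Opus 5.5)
Your proposal is correct and follows the paper's own (implicit) argument: the two lemmas give $|\operatorname{Inv}(\rho_{\operatorname{max}})|=|\operatorname{Inv}(\pi_{\operatorname{max}})|=|\I(w)|$, and these feed into the preceding proposition, whose uniqueness part yields the common class and whose maximality comes from Lemma~\ref{lem:321}. Your additions are exactly the points the paper leaves implicit: membership of both words in $A(n,2;w)$, the linear-extension argument turning agreement on non-commuting pairs into commutation equivalence, and reading $\operatorname{Inv}(\rho)$ as a subset of $\I(w)$. Under that reading your third step follows from Lemma~\ref{lem:321} alone, without Lemmas~\ref{lem:P3} or \ref{lem:describe}.
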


\begin{remark}
    To clarify the abusive use of the word ``maximal'': the above result  only implies that if $r_{\operatorname{max}} \leq r$, then $r_{\operatorname{max}} = r$. However, it is not true a priori that $r_{\operatorname{max}} \geq r$ for all $r\in B(n,2;w)$, because it is not yet clear that $r_{\operatorname{max}}$ is comparable to any $r\in B(n,2;w)$. In other words, we do not yet know that in the conflated expression graph, any path from $r_{\operatorname{max}}$ to $r$ has a consistent orientation. We will show this later.
\end{remark}

Let $t\in \mathfrak{S}_n, t(i)=n+1-i$ for $1\leq i\leq n$ be the map that reverses the letter set, let $\tau: C(n,k) \to C(n,k)$ be the map that it induces. Also, let $\tau: A(n,2;w) \to A(n,2;w)$ be its extension, i.e., $\tau(K_1 K_2\cdots K_N)=\tau(K_1)\tau(K_2)\cdots\tau(K_N)$. Let $\rho_{\operatorname{max}}(w)$ be the unique Manin-Schechtman maximal word associated with $w\in  \mathfrak{S}_n$ and define $\rho_{\operatorname{min}}=\tau(\rho_{\operatorname{max}}(twt))$ to be the \emph{Manin-Schechtman minimal} word associated with $w$. 

Similarly, let $\pi_{\operatorname{max}}(w)$ be the unique super-Yamanouchi word associated with $w\in  \mathfrak{S}_n $ and define $\pi_{\operatorname{min}}=\tau(\pi_{\operatorname{max}}(twt))$ to be the \emph{reverse super-Yamanouchi} word associated with $w $. Then one can dualize Algorithm 1, Algorithm 2, and all the subsequent proofs, by reversing the letter set and the indexing set if necessary, and one obtains the following results.

\begin{proposition}
(1) Both $\rho_{\operatorname{min}}$ and $\pi_{\operatorname{min}}$  lie in the same commutation class $r_{\operatorname{min}}\in B(n,2;w)$, and $r_{\operatorname{min}}$ is the unique minimal word with respect to the rank function with $|\operatorname{Inv}(r)|=0$.

(2) Written in terms of simple transpositions, $\pi_{\operatorname{min}}$ is the unique word subject to the following condition: it has a decomposition $\pi=(\pi^{(1)}\mid\pi^{(2)}\mid\cdots\mid\pi^{(t)})$ such that each $\pi^{(i)}$ is a decreasing interval, and the beginning of $\rho^{(i)}$ is smaller than the beginning of $\rho^{(i+1)}$, for $1\leq i\leq t-1$.
\end{proposition}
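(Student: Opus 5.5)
The plan is to transport everything through the reversal map $\tau$. The key structural fact is that conjugation by $t$ interacts well with inversions and packets. If $(a,b)\in \operatorname{Inv}(w)$, then $\tau(a,b)=(n+1-b,\,n+1-a)\in \operatorname{Inv}(twt)$. Indeed, $twt$ reverses both positions and values, so an inverted pair of values $a<b$ in $w$ becomes the inverted pair $n+1-b<n+1-a$ in $twt$. Moreover, a maximal path $\operatorname{id}\to w$ in $\Gamma_w$ is carried, letter by letter via $t$ acting on one-line notation by reversing positions and values, to a maximal path $\operatorname{id}\to twt$ in $\Gamma_{twt}$, with edge labels transformed by $\tau$. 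Hence $\tau$ gives a bijection $A(n,2;w)\to A(n,2;twt)$. It preserves commutation, since $|\tau K_1\cup\tau K_2|=|K_1\cup K_2|$, so it descends to $B(n,2;w)\to B(n,2;twt)$. It also sends $321$-patterns of $w$ to $321$-patterns of $twt$, so $\tau(\I(w))=\I(twt)$.

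The main check, and what I expect to be the only delicate point, is that $\tau$ reverses the lexicographic order on each packet. For $K=(abc)$, the lex order on $P(K)$ is $(ab)<(ac)<(bc)$. Under $\tau$, with $a'=n+1-c<b'=n+1-b<c'=n+1-a$, these go to $(b'c'),(a'c'),(a'b')$, which is the anti-lex order on $P(\tau K)$. Therefore $K\in\operatorname{Inv}(\rho)$ iff $\tau K\notin \operatorname{Inv}(\tau\rho)$, for $K\in\I(w)$. Consequently $|\operatorname{Inv}(\rho)|+|\operatorname{Inv}(\tau\rho)|=|\I(w)|$.

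For (1), apply this to $\rho_{\operatorname{max}}(twt)$ and $\pi_{\operatorname{max}}(twt)$. Both have $|\operatorname{Inv}|=|\I(twt)|$, so $\rho_{\operatorname{min}}$ and $\pi_{\operatorname{min}}$ have $|\operatorname{Inv}|=0$. For uniqueness, I would mirror the proof of the preceding Proposition. If $|\operatorname{Inv}(\rho_1)|=|\operatorname{Inv}(\rho_2)|=0$, then every $321$-packet is ordered lexicographically in both. The $312$- and $231$-packets are forced by Lemma~\ref{lem:describe}, and other triples contribute at most one non-commuting relation, so $\rho_1\sim\rho_2$. Alternatively, apply $\tau$ and invoke the uniqueness already proved for $twt$. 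Since rank is nonnegative, this class is minimal.

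For (2), translate the super-Yamanouchi condition through $\tau$. In terms of positions, the move $\operatorname{inv}_{(v_k,v_{k+1})}$ is $s_k$, and conjugation by $t$ sends $s_k$ to $s_{n-k}$. So the simple-transposition word of $\pi_{\operatorname{min}}$ is obtained from that of $\pi_{\operatorname{max}}(twt)$ by $i\mapsto n-i$. This turns increasing intervals into decreasing ones, and turns "beginnings decreasing" into "beginnings increasing". Uniqueness of such a word follows from the uniqueness of the super-Yamanouchi word for $twt$ in \cite{As19}, transported by the same bijection $i\mapsto n-i$ between reduced expressions of $twt$ and of $w$.
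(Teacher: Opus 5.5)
Your proposal is correct and follows the paper's route. The paper only asserts that Algorithms 1--2 and the subsequent proofs dualize under reversing letters and positions; you make this explicit by checking that $\tau$ is a commutation-preserving bijection $A(n,2;w)\to A(n,2;twt)$ sending $\I(w)$ to $\I(twt)$ and swapping lexicographic and anti-lexicographic orders on packets (so, with Lemma~\ref{lem:P3}, the ranks are complementary), and that $s_i\mapsto s_{n-i}$ transports the super-Yamanouchi condition. One cosmetic slip: $132$-, $213$- and $123$-triples contribute no non-commuting pair at all, since at most one of their $2$-subsets lies in $\operatorname{Inv}(w)$.
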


In particular, the dual version of \cite[Algorithm 1]{As19} becomes the following. This allows one to construct the reverse super-Yamanouchi word in terms of simple transpositions.
\begin{algorithm}[ht]
    \begin{algorithmic}[1]
    \Procedure{reveres-Super}{$w$}
    \State $v \gets w$
    \State $\pi \gets ()$
    \While {$\ell (v)>0$} 
   \State $j \gets \operatorname{min}\{k \mid v_k<v_{k-1}\}$
   \While {$v_j < v_{j-1} \land j>1$} 
   \State $\pi \gets  (\pi, s_{j-1}) $
   \State $v \gets s_{j-1}(v)$
   \State $j \gets j-1 $ 
   \EndWhile
    \EndWhile
    \State \textbf{return} $\pi$
    \EndProcedure
    \end{algorithmic}
    \caption{Reverse Super-Yamanouchi word}
\end{algorithm}

\begin{example}\label{ex:3}
    Continuing from Example~\ref{ex:2}, we use the permutation $w=41758236$. We have $twt=36714285$ and $\rho_{\operatorname{max}}(twt)=(56)(57)(58)(46)(47)(23)(26)(27)(24)(13)(16)(17)$. Therefore, $\rho_{\operatorname{min}}=\tau(\rho_{\operatorname{max}}(twt))=(34)(24)(14)(35)(25)(67)(37)(27)(57)(68)(38)(28)$. Notice that the second letter appears increasingly in each set.

    On the other hand, $\pi_{\operatorname{max}}(twt)=(23)(13)(56)(46)(26)(16)(37)(47)(27)(17)(24)(58)$, and $\pi_{\operatorname{min}}=\tau(\pi_{\operatorname{max}}(twt))=(67)(68)(34)(35)(37)(38)(24)(25)(27)(28)(57)(14)$. Written in terms of simple transpositions, $\pi_{\operatorname{min}}=s_{1}\mid s_{3} \mid s_{5}s_{4}s_{3}s_{2}\mid s_{6}s_{5}s_{4}s_{3} \mid s_{7}s_{6}$.
\end{example}

\begin{theorem}\label{th:key}
Suppose $\rho \in A(n,2;w)$, $|\operatorname{Inv}(\rho)|<|\I(w)|$. Then there exists $\rho'$ equivalent to $\rho$, $K \in \I(w)\backslash \operatorname{Inv}(\rho)$, such that $P(K)$ forms a chain in $\rho'$. In other words, $\operatorname{inv}_K(\rho')$ is well-defined.
\end{theorem}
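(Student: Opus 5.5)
The plan is to argue by a minimality (``smallest triangle'') argument, in the spirit of the pseudoline-arrangement proof of Theorem~\ref{th:ms} for $k=1$. By Theorem~\ref{th:ms}(2), applied through the preceding proposition, $|\operatorname{Inv}(\rho)|<|\I(w)|$ gives some $K=(abc)\in\I(w)$, $a<b<c$, whose packet is ordered lexicographically in $\rho$. By Lemma~\ref{lem:P3}, this means $(ab)$, $(ac)$, $(bc)$ occur in this order. For such $K$, define its \emph{span} in $\rho$ to be the number of sets strictly between $(ab)$ and $(bc)$. Among all equivalent $\rho'\sim\rho$ and all $K\in\I(w)\backslash\operatorname{Inv}(\rho)$, choose a pair minimizing the span. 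Since $\operatorname{Inv}$ is constant on the class, the set of non-inverted $321$-patterns does not depend on the representative. The goal is to show that for a minimizing pair the span is exactly $1$, so that $P(K)$ is a chain in $\rho'$.

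Suppose the span is larger than $1$. First push every set $L$ between $(ab)$ and $(bc)$ that commutes with all of $P(K)$ out of the interval. Do this by repeatedly exchanging commuting neighbours: a set $L$ that commutes with everything on one side of it inside the interval is moved past that side. This lowers the span. So after such moves, some set in the interval obstructs, i.e.\ it contains a letter of $\{a,b,c\}$. Such a set has the form $(xd)$ with $x\in\{a,b,c\}$ and $d\notin\{a,b,c\}$; sets using two letters of $\{a,b,c\}$ are the packet itself. In the wiring-diagram picture, the line $d$ enters the triangle cut out by the lines $a,b,c$. Since lines are monotone and $\rho$ is a maximal path in $\Gamma_w$, the line $d$ must also leave the triangle, so it crosses a second line $y\in\{a,b,c\}$ inside the interval. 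Then $(xd),(yd)\in\operatorname{Inv}(w)$, and together with $(xy)$ this makes $\{x,y,d\}$ a $321$-pattern by Lemma~\ref{lem:321}. Its packet lies inside a strictly shorter interval of $\rho'$.

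The key step is to ensure that some such smaller triangle is still ordered lexicographically, which contradicts minimality. Here I would use the trick of Remark~\ref{rem:trick}. Restricting $\rho'$ to the sets contained in $\{a,b,c,d\}$ gives a maximal chain for the pattern of $w$ on these four letters. This pattern is a permutation of $\mathfrak{S}_4$ containing $321$ on $a,b,c$. The sets not contained in $\{a,b,c,d\}$ do not affect the relative order of these four letters. Thus it suffices to run a finite check in the Manin--Schechtman graph of $\mathfrak{S}_4$ (Example~\ref{ex:1}), intersected with $\Gamma_w$ for the relevant patterns. In every maximal chain in which $P(abc)$ is lexicographic and some $(xd)$ lies strictly between $(ab)$ and $(bc)$, one of the triangles $\{a,b,d\}$, $\{a,c,d\}$, $\{b,c,d\}$ should be a $321$-pattern with lexicographically ordered packet lying strictly inside the span. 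This is plausible because, by Theorem~\ref{th:ms}(4) and the $3$-packet consistency in $A(4,3)$, the inversion set of $\rho$ restricted to the $4$-subset must be an admissible pattern, a beginning or ending segment of the lex order on $P(abcd)$. So $(abc)$ being non-inverted forces a neighbouring triangle to be non-inverted as well. Lemma~\ref{lem:describe} fixes the orders of the $312$- and $231$-type packets in this check.

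I expect this $\mathfrak{S}_4$ case analysis to be the main obstacle. One must confirm that the new non-inverted triangle really has smaller span, not merely that it exists. One must also handle the case where $d$ crosses only at the boundary positions. If the four-letter check does not close directly, my fallback is to choose the intruding line $d$ whose crossing with the triangle is extremal, first or last in $\rho'$, and repeat the argument.
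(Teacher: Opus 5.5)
Your outline follows the same plan as the paper: a minimality choice (your minimal span over the commutation class, versus the paper's ``closest together'' condition), blocking sets that meet $\{a,b,c\}$, and a reduction to $\mathfrak{S}_4$ via Remark~\ref{rem:trick}. The gap is that the step carrying the whole argument is only asserted, whereas the paper's proof consists precisely of doing this $\mathfrak{S}_4$ check case by case. Moreover, the check as you formulate it is false, and so is the justification you offer.

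\textbf{The four-letter check as stated fails.}
\begin{itemize}
\item Take $a<b<c<d$. The word $(ab)(ac)(ad)(bc)$ is a maximal chain in $\Gamma_w$ for $w=3241$ (with $a,b,c,d=1,2,3,4$). Here $(ad)$ lies strictly between $(ab)$ and $(bc)$, yet $\{a,b,c\}$ is the only $321$-pattern.
\item Take $d<a<b<c$. The chain $(ab)(ac)(bc)(dc)(db)(da)$ has inversion set $\{(dab),(dac),(dbc)\}$. This is an initial segment of $(dab)<(dac)<(dbc)<(abc)$, and $(abc)$ is the only non-inverted triple. So the segment property from Theorem~\ref{th:ms}(4) does not force a second non-inverted triple.
\end{itemize}
In both examples $P(abc)$ already is, or can be made, a chain. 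Any correct version must therefore use that $d$ actually crosses a side of the triangle.

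\textbf{Even then, the cut-off triangle can be inverted.} In $(ab)(db)(da)(dc)(ac)(bc)$ with $d<a$, the triangle $\{x,y,d\}=\{d,a,b\}$ appears as $(ab)(db)(da)$, which is anti-lexicographic. The missing argument has to produce a different triangle. Here $d$ is forced to cross $c$ strictly between $(da)$ and $(ac)$; otherwise $d$ would separate $a$ from $c$ at the moment they must be adjacent. Then $\{d,a,c\}$ appears as $(da)(dc)(ac)$, which is lexicographic, is a $321$-pattern, and has its packet strictly inside the span. The mirror case $d>c$ ($d$ crosses $c$ first, then $b$) uses the forced crossing $(ad)$ between $(ac)$ and $(cd)$. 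In the remaining configurations the cut-off triangle is already lexicographic: $d$ crosses $a$ first, or crosses $b$ first and then $c$.

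\textbf{The wiring-diagram step needs repair.} A set in the span meeting $\{a,b,c\}$ need not be a crossing with a side of the triangle. Wire $a$ is a side only before $(ac)$, and wire $c$ only after it. So $(ad)$ placed after $(ac)$, or $(cd)$ placed before it, does not enter the triangle; the first example above shows this, with $d$ meeting no second wire of $\{a,b,c\}$.

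What you actually need is a convexity statement in the commutation (heap) order. Suppose no wire crosses a side within the span. Then any sequence of sets at increasing positions, with consecutive sets sharing a letter, leading from $(ab)$ to $(ac)$ or $(bc)$, must stay on the sides. Hence nothing outside $P(K)$ is forced between members of $P(K)$, and $P(K)$ can be made consecutive by commutations. Only with this does your minimality produce a genuine side-crossing wire $d$, to which the analysis above applies.
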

\begin{proof}
    We choose $L\in \I(w)\backslash \operatorname{Inv}(\rho)$ which admits the \emph{closest together} property in the following sense: suppose $\rho=K_1K_2\cdots K_N$, $P(L)=\{K_{i_1},K_{i_2},K_{i_3}\}$ with $1\leq i_1<i_2<i_3\leq N$, then there does not exist $L'\in \I(w)\backslash \operatorname{Inv}(\rho)$, $L\neq L'$, $P(L')=\{K_{j_1},K_{j_2},K_{j_3}\}$ with $i_1\leq j_1<j_2<j_3\leq i_3$. i.e. there does not exist another packet lying nested inside $P(L)$. Notice that if such $L'$ exists, then the qualities in $i_1\leq j_1$ and $j_3\leq i_3$ cannot be both achieved, otherwise $L'=L=K_{i_1}\cup K_{i_3}$.

    Recall that $M_1,M_2 \in C(n,2)$ are said to commute if they don't belong to a common packet. We now claim that $P(L)$ can be put into a chain by moving commuting neighbors past each other. Suppose on the contrary, that this cannot be done. Then one of the following must be true (1) There exists $K'$ between  $K_{i_1},K_{i_2}$ which cannot be commuted past $K_{i_1}$ to the left, nor can it be commuted past $K_{i_2}$ to the right, or (2) There exists $K'$ between  $K_{i_2},K_{i_3}$ which cannot be commuted past $K_{i_2}$ to the left, nor can it be commuted past $K_{i_3}$ to the right, or (3)  There exists $K'$ between  $K_{i_1},K_{i_3}$ which cannot be commuted past $K_{i_1}$ to the left, nor can it be commuted past $K_{i_3}$ to the right, but $K'$ commutes with $K_{i_2}$.

    Let $L=(abc)$ with $a<b<c$. Then $K_{i_1}=(ab)$, $K_{i_2}=(ac)$, $K_{i_3}=(bc)$. Suppose we are in case (1) above. Then $K'$ has to be in the form of $\{a,d\}$ for some $d\notin\{a,b,c\}$. We further discuss by cases: (i) $d<a$, (ii) $a<d<b$, (iii) $b<d<c$, (iv) $c<d$.

 Suppose we are in case (i), then the four sets appear in the order $(ab)(da)(ac)(bc)$. By using the trick in Remark~\ref{rem:trick}, they are modeled by the behavior of $(23)(12)(24)(34)$ in $\mathfrak{S}_4$. By manually checking all the paths in Example~\ref{ex:1}, the only  path where these four sets appear in this order, is the path $(23)(13)(12)(14)(24)(34)$. In this case, $P((124))$ is another packet nested inside $P((234))$, i.e. $P((dbc))=\{(db)(dc)(bc)\}$ is another packet nested inside $P(L)$, violating the closest together assumption for $L$. We reach a contradiction.


Suppose we are in case (ii), then the four sets appear in the order $(ab)(ad)(ac)(bc)$. By using the trick in Remark~\ref{rem:trick}, they are modeled by the behavior of $(13)(12)(14)(34)$ in $\mathfrak{S}_4$. By manually checking all the paths in Example~\ref{ex:1}, the only  path where these four sets appear in this order, is the path $(23)(13)(12)(14)(24)(34)$. In which case, $P((124))$ is another packet nested inside $P((134))$, i.e. $P((adc))=\{(ad)(ac)(dc)\}$ is another packet nested inside $P(L)$, violating the closest together assumption for $L$. We reach a contradiction.

    Suppose we are in case (iii), then the four sets appear in the order $(ab)(ad)(ac)(bc)$. By using the trick in Remark~\ref{rem:trick}, they are modeled by the behavior of $(12)(13)(14)(24)$ in $\mathfrak{S}_4$. By manually checking all the paths in Example~\ref{ex:1}, the only  paths where these four sets appear in this order, are the paths $(12)(13)(14)(23)(24)(34)$ or $(12)(13)(23)(14)(24)(34)$. In either case, $P((134))$ is another packet nested inside $P((124))$, i.e. $P((adc))=\{(ad)(ac)(dc)\}$ is another packet nested inside $P(L)$, violating the closest together assumption for $L$. We reach a contradiction.

    Suppose we are in case (iv), then the four sets appear in the order $(ab)(ad)(ac)(bc)$. By using the trick in Remark~\ref{rem:trick}, they are modeled by the behavior of $(12)(14)(13)(23)$ in $\mathfrak{S}_4$. By manually checking all the paths in Example~\ref{ex:1}, the only  paths where these four sets appear in this order, are the paths $(12)(34)(14)(13)(24)(23)$ or $(12)(34)(14)(24)(13)(23)$. In either case, $P((124))$ is another packet nested inside $P((123))$, i.e. $P((abd))=\{(ab)(ad)(bd)\}$ is another packet nested inside $P(L)$, violating the closest together assumption for $L$. We reach a contradiction.

    Case (2) is symmetric to Case (1) and can be checked by a similar argument. We now discuss Case (3). $K'$ must be equal to $\{b,d\}$ for some $d\notin\{a,b,c\}$ and we can assume it lies between $K_{i_1}$ and $K_{i_2}$. We still discuss by cases: (i) $d<a$, (ii) $a<d<b$, (iii) $b<d<c$, (iv) $c<d$. The arguments are similar as above and we include them for completeness nevertheless.

    Suppose we are in case (i), then the four sets appear in the order $(ab)(db)(ac)(bc)$. By using the trick in Remark~\ref{rem:trick}, they are modeled by the behavior of $(23)(13)(24)(34)$ in $\mathfrak{S}_4$. By manually checking all the paths in Example~\ref{ex:1}, the only  paths where these four sets appear in this order, are the paths $(23)(13)(24)(14)(34)(12)$ or $(23)(13)(24)(14)(12)(34)$. In either case, $P((134))$ is another packet nested inside $P((234))$, i.e. $P((dbc))=\{(db)(dc)(bc)\}$ is another packet nested inside $P(L)$, violating the closest together assumption for $L$. We reach a contradiction.

    Suppose we are in case (ii), then the four sets appear in the order $(ab)(db)(ac)(bc)$. By using the trick in Remark~\ref{rem:trick}, they are modeled by the behavior of $(13)(23)(14)(34)$ in $\mathfrak{S}_4$. By manually checking all the paths in Example~\ref{ex:1}, the only  path where these four sets appear in this order, is the path $(12)(13)(23)(14)(24)(34)$. In this case, $P((123))$ is another packet nested inside $P((134))$, i.e. $P((adb))=\{(ad)(ab)(db)\}$ is another packet nested inside $P(L)$, violating the closest together assumption for $L$. We reach a contradiction.

    Suppose we are in case (iii), then the four sets appear in the order $(ab)(bd)(ac)(bc)$. By using the trick in Remark~\ref{rem:trick}, they are modeled by the behavior of $(12)(23)(14)(24)$ in $\mathfrak{S}_4$. By manually checking all the paths in Example~\ref{ex:1}, the only  path where these four sets appear in this order, is the path $(12)(13)(23)(14)(24)(34)$. In this case, $P((123))$ is another packet nested inside $P((124))$, i.e. $P((abd))=\{(ab)(ad)(bd)\}$ is another packet nested inside $P(L)$, violating the closest together assumption for $L$. We reach a contradiction.

     Suppose we are in case (iv), then the four sets appear in the order $(ab)(bd)(ac)(bc)$. By using the trick in Remark~\ref{rem:trick}, they are modeled by the behavior of $(12)(24)(13)(23)$ in $\mathfrak{S}_4$. By manually checking all the paths in Example~\ref{ex:1}, the only  paths where these four sets appear in this order, are the paths $(34)(12)(14)(24)(13)(23)$ and $(34)(12)(14)(13)(24)(23)$. In either case, $P((124))$ is another packet nested inside $P((123))$, i.e. $P((abd))=\{(ab)(ad)(bd)\}$ is another packet nested inside $P(L)$, violating the closest together assumption for $L$. We reach a contradiction.
\end{proof}

\begin{corollary}
    The class $r_{\operatorname{max}}$ is the unique maximal element in $B(n,2;w)$ with respect to the Manin-Schechtman partial order, i.e. $r_{\operatorname{max}}\geq r$ for all $r\in B(n,2;w)$.
\end{corollary}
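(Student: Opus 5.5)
The plan is to climb from an arbitrary class to $r_{\operatorname{max}}$ by repeatedly applying Theorem~\ref{th:key}, and then to invoke the uniqueness of the class of full rank. Fix $r\in B(n,2;w)$ with a representative $\rho\in A(n,2;w)$, and induct downward on the gap $|\I(w)|-|\operatorname{Inv}(r)|$, which is a nonnegative integer. By Lemma~\ref{lem:321}, every $K\in\operatorname{Inv}(r)$ has its whole packet inside $\operatorname{Inv}(w)$, so $K\in\I(w)$; hence $\operatorname{Inv}(r)\subseteq \I(w)$.

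If the gap is zero, then $|\operatorname{Inv}(r)|=|\I(w)|$. The Proposition on the uniqueness of the element attaining $|\operatorname{Inv}(\cdot)|=|\I(w)|$ gives $r=r_{\operatorname{max}}$, and the corollary identifying $r_{\operatorname{max}}$ as the class of $\rho_{\operatorname{max}}$ and $\pi_{\operatorname{max}}$ confirms that this is the same $r_{\operatorname{max}}$.

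If the gap is positive, Theorem~\ref{th:key} supplies $\rho'\sim\rho$ and $K\in\I(w)\setminus\operatorname{Inv}(\rho)$ such that $P(K)$ forms a chain in $\rho'$. Since $K\notin\operatorname{Inv}(\rho)$ and $\rho'$ is admissible on $P(K)$ by Lemma~\ref{lem:P3}, the packet is ordered lexicographically in $\rho'$. Therefore $\operatorname{inv}_K(\rho')$ is defined and is a single braid move by Remark~\ref{rmk:MS}.

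Two facts must then be checked. First, $\operatorname{inv}_K(\rho')$ still lies in $A(n,2;w)$. This holds because a braid move turns a reduced expression of $w$ into another reduced expression of $w$, so the reversed triple again traces a path in $\Gamma_w$ from $\operatorname{id}$ to $w$; Lemma~\ref{lem:same} identifies this move with the Assaf braid relation. Second, $\operatorname{Inv}(\operatorname{inv}_K(\rho'))=\operatorname{Inv}(\rho)\cup\{K\}$. Only the relative order inside $P(K)$ changes, and any other packet $P(K'')$ shares at most one element with $P(K)$, so its induced order is unchanged.

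Setting $r_1$ to be the class of $\operatorname{inv}_K(\rho')$, we get $r\le r_1$ by the definition of the partial order in Theorem~\ref{th:ms}(1), restricted to $B(n,2;w)$, and the gap drops by one. By induction $r_1\le r_{\operatorname{max}}$, so by transitivity $r\le r_{\operatorname{max}}$. The iteration also produces a maximal chain from $r$ to $r_{\operatorname{max}}$ through $r$ of length equal to the gap.

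The main difficulty, namely producing the chain $P(K)$ up to commutation, is already settled in Theorem~\ref{th:key}. The remaining delicate point is the well-definedness check in the second paragraph: that $\operatorname{inv}_K(\rho')$ stays in $A(n,2;w)$, and that the inversion set increases by exactly $K$ rather than merely in cardinality.
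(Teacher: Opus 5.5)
Your proposal is correct and follows the argument the paper leaves implicit, since it states the corollary without proof. You apply Theorem~\ref{th:key} repeatedly, raising $|\operatorname{Inv}(\cdot)|$ by one each time until it reaches $|\I(w)|$, and then use the uniqueness Proposition to identify the endpoint with $r_{\operatorname{max}}$. Your two additional checks, that $\operatorname{inv}_K(\rho')$ stays in $A(n,2;w)$ and that the inversion set grows by exactly $\{K\}$, are details the paper omits, and you handle them correctly.
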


By dualizing the arguments in Theorem~\ref{th:key}, we obtain the following.
\begin{theorem}
    Suppose $\rho \in A(n,2;w)$, $|\operatorname{Inv}(\rho)|\neq 0$. Then there exists $\rho'$ equivalent to $\rho$, $K \in \operatorname{Inv}(\rho)$, such that $P(K)$ forms a chain in $\rho'$. Moreover, $\operatorname{inv}_K(\gamma)=\rho'$ for some $\gamma \in A(n.2;w)$.
\end{theorem}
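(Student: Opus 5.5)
The plan is to dualize the proof of Theorem~\ref{th:key} rather than redo it, using the reversal map $\tau$ introduced before the definition of $\rho_{\operatorname{min}}$.

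\textbf{Step 1: $\tau$ swaps the two situations.} Check that $\tau$ gives a bijection $A(n,2;w)\to A(n,2;twt)$ that preserves commutation equivalence. Conjugating by $t$ reverses the letter set, so it sends $321$-patterns of $w$ to $321$-patterns of $twt$; hence $\mathcal{I}(twt)=\tau(\mathcal{I}(w))$. Reversing letters also turns the lexicographic order on a packet into the anti-lexicographic one: $(ab)(ac)(bc)$ becomes $(b'c')(a'c')(a'b')$ with $a'>b'>c'$. It follows that
\[
\operatorname{Inv}(\tau\rho)=\tau\bigl(\mathcal{I}(w)\setminus\operatorname{Inv}(\rho)\bigr),
\qquad
|\operatorname{Inv}(\tau\rho)|=|\mathcal{I}(w)|-|\operatorname{Inv}(\rho)|.
\]
So $|\operatorname{Inv}(\rho)|\neq 0$ becomes $|\operatorname{Inv}(\tau\rho)|<|\mathcal{I}(twt)|$.

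\textbf{Step 2: apply Theorem~\ref{th:key} and pull back.} Theorem~\ref{th:key} applied to $\tau\rho$ gives $\sigma\sim\tau\rho$ and $K_0\in\mathcal{I}(twt)\setminus\operatorname{Inv}(\tau\rho)$ such that $P(K_0)$ is a chain in $\sigma$. Set $\rho'=\tau\sigma$ and $K=\tau K_0$. Then $\rho'\sim\rho$, $K\in\operatorname{Inv}(\rho)$ by Step 1, and $P(K)$ is a chain in $\rho'$, since $\tau$ only relabels entries and does not move positions.

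\textbf{Step 3: the ``moreover'' part.} Since $K\in\operatorname{Inv}(\rho')$, the packet $P(K)$ appears in $\rho'$ consecutively in anti-lexicographic order $(bc)(ac)(ab)$. Let $\gamma$ be $\rho'$ with this block reversed to $(ab)(ac)(bc)$.

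\begin{itemize}
\item \emph{$\gamma\in A(n,2;w)$.} This is the local braid move in (\ref{eq:b1})/(\ref{eq:b2}): replacing one block by the other takes $\cdots abc\cdots$ to $\cdots cba\cdots$ along the other path. The prefix and suffix of the sequence are unchanged, so $\gamma$ is still a maximal path in $\Gamma_w$.
\item \emph{$\operatorname{inv}_K(\gamma)=\rho'$.} $\gamma$ induces the lexicographic order on the chain $P(K)$, so $\operatorname{inv}_K(\gamma)$ is defined, and by definition it equals $\rho'$.
\end{itemize}

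\textbf{Main obstacle.} The real work is Step 1: checking that the whole framework is $\tau$-equivariant, not just the inversion count. The points to verify are:
\begin{itemize}
\item $y\le w$ in Bruhat order iff $tyt\le twt$, so $\Gamma_w$ maps isomorphically onto $\Gamma_{twt}$;
\item commutation of sets is preserved;
\item the notion of a chain is preserved.
\end{itemize}
If any of these fails to be clean, the fallback is to rerun the case analysis of Theorem~\ref{th:key} directly: choose $L\in\operatorname{Inv}(\rho)$ with the closest-together property, and use the trick of Remark~\ref{rem:trick} to reduce each of cases (1)--(3) to the $\mathfrak{S}_4$ paths of Example~\ref{ex:1}, now with packets ordered anti-lexicographically.
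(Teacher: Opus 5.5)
Your proposal is correct and follows the paper's route. The paper proves the statement only by saying ``dualize Theorem~\ref{th:key}'' via reversing the letter set. You make this precise by applying Theorem~\ref{th:key} to $\tau\rho\in A(n,2;twt)$ and pulling back along $\tau$. You also justify the ``moreover'' clause, which the paper leaves implicit: reversing the consecutive anti-lexicographic block is exactly the braid move of (\ref{eq:b1})/(\ref{eq:b2}).

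One harmless slip in your notation: with $a'=t(a)$, $b'=t(b)$, $c'=t(c)$ (so $a'>b'>c'$), the image of $(ab)(ac)(bc)$ is $(a'b')(a'c')(b'c')$, which is anti-lexicographic. The sequence $(b'c')(a'c')(a'b')$ that you wrote is anti-lexicographic only under the opposite labelling $a'<b'<c'$. This does not affect the argument.
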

\begin{corollary}
    The class $r_{\operatorname{min}}$ is the unique minimal element in $B(n,2;w)$ with respect to the Manin-Schechtman partial order, i.e. $r_{\operatorname{min}}\leq r$ for all $r\in B(n,2;w)$.
\end{corollary}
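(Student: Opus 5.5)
We deduce the corollary by downward induction on rank, using the dual of Theorem~\ref{th:key} just stated. Let $r\in B(n,2;w)$ and induct on $|\operatorname{Inv}(r)|$. If $|\operatorname{Inv}(r)|=0$, then part (1) of the proposition on $\rho_{\operatorname{min}}$ and $\pi_{\operatorname{min}}$ shows that $r=r_{\operatorname{min}}$. Uniqueness there comes from the dual of the uniqueness argument for $r_{\operatorname{max}}$: when no $321$-pattern is inverted, every non-commuting pair is ordered, either by Lemma~\ref{lem:describe} or by the lexicographic order on $P(K)$ for $K\in\I(w)$. So $r_{\operatorname{min}}\leq r$ holds trivially in this case.

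For the inductive step, suppose $|\operatorname{Inv}(r)|>0$ and pick a representative $\rho$. The preceding theorem gives $\rho'\sim\rho$, a set $K\in\operatorname{Inv}(\rho)$ with $P(K)$ a chain in $\rho'$, and $\gamma\in A(n,2;w)$ with $\operatorname{inv}_K(\gamma)=\rho'$. Two facts need checking.
\begin{enumerate}[(a)]
\item $\gamma$ is a genuine element of $A(n,2;w)$. This is asserted in the theorem: reversing an anti-lexicographic chain $P(K)$ to a lexicographic one is the reverse braid move, via Lemma~\ref{lem:same} and the local pictures (\ref{A1}), (\ref{A2}).
\item $\operatorname{Inv}(\gamma)=\operatorname{Inv}(\rho)\setminus\{K\}$. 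The packet $P(K)$ becomes lexicographic. Any other $K'\in C(n,3)$ shares at most one element with $P(K)$, so its induced order is unchanged, because $P(K)$ is contiguous in $\rho'$ and is merely reversed in place.
\end{enumerate}
Hence $|\operatorname{Inv}(\gamma)|=|\operatorname{Inv}(r)|-1$. Writing $[\gamma]$ for the class of $\gamma$, the definition of the Manin--Schechtman order gives $[\gamma]\leq r$, since $r=\operatorname{inv}_K([\gamma])$. By induction $r_{\operatorname{min}}\leq[\gamma]$, and transitivity gives $r_{\operatorname{min}}\leq r$.

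Uniqueness follows at once. Any minimal element $r$ satisfies $r_{\operatorname{min}}\leq r$, so $r=r_{\operatorname{min}}$ by minimality. The relation is a partial order, being the transitive closure of rank-increasing steps, with rank $|\operatorname{Inv}(\cdot)|$.

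The main obstacle is the dual theorem itself, on which everything rests. If I had to justify it rather than assume it, I would apply the symmetry $\tau$ together with $w\mapsto twt$. I would also reverse the word, which reverses paths from $\operatorname{id}$ to $w$ into paths related to $w^{-1}$; alternatively, I would rerun the closest-together case analysis of Theorem~\ref{th:key} with lexicographic and anti-lexicographic interchanged. The delicate point is checking that this dualization sends $\operatorname{Inv}(\rho)$ to the complement $\I(w')\setminus\operatorname{Inv}(\rho^*)$ for the dual word $\rho^*$. Once that correspondence is checked, Theorem~\ref{th:key} transfers directly.
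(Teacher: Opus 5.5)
Your proposal is correct and is essentially the paper's argument. The paper leaves the corollary as an immediate consequence of the dual of Theorem~\ref{th:key}: descend one rank at a time via $\gamma$ with $\operatorname{inv}_K(\gamma)=\rho'$ until reaching rank $0$, where part (1) of the proposition identifies the class as $r_{\operatorname{min}}$. Your induction makes this explicit, including the check that $\operatorname{Inv}(\gamma)=\operatorname{Inv}(\rho)\setminus\{K\}$ because distinct packets share at most one set.
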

\begin{corollary}
    Every $r\in B(n,2;w)$ sits on a maximal chain from $r_{\operatorname{min}}$ to $r_{\operatorname{max}}$, and the length of the chain is $|\I(w)|$.
\end{corollary}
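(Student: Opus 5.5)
The plan is to build the chain in two halves that meet at $r$, using the two key theorems (Theorem~\ref{th:key} and its dual) together with the rank function $|\operatorname{Inv}(\cdot)|$ and the uniqueness of the extreme classes.

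\textbf{Step 1 (upward half).} Let $\rho\in r$ be a representative. If $|\operatorname{Inv}(\rho)|<|\I(w)|$, Theorem~\ref{th:key} gives an equivalent $\rho'$ and some $K\in\I(w)\setminus\operatorname{Inv}(\rho)$ for which $\operatorname{inv}_K(\rho')$ is defined. One checks that $\operatorname{inv}_K(\rho')$ lies again in $A(n,2;w)$: by Lemma~\ref{lem:same} it is a braid move on a reduced expression of $w$. It also satisfies
\[
\operatorname{Inv}(\operatorname{inv}_K(\rho'))=\operatorname{Inv}(\rho)\cup\{K\}.
\]
This holds because reversing a chain $P(K)$ changes the relative order only within $P(K)$. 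For any other $K''\in C(n,3)$ we have $|P(K'')\cap P(K)|\le 1$, so the order induced on $P(K'')$ is unchanged. Hence the rank rises by exactly one. Iterating this, the rank strictly increases and is bounded by $|\I(w)|$ (Lemma~\ref{lem:321}: every element of $\operatorname{Inv}$ is a 321-pattern). The process therefore terminates at a class of rank $|\I(w)|$. By the Proposition on uniqueness of the class with $|\operatorname{Inv}|=|\I(w)|$, that class is $r_{\operatorname{max}}$. This yields a chain $r=r_0<r_1<\cdots<r_{\operatorname{max}}$ of length $|\I(w)|-|\operatorname{Inv}(r)|$.

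\textbf{Step 2 (downward half).} Dually, while $|\operatorname{Inv}(r)|\ne 0$, the dual theorem supplies $\rho'\sim\rho$, $K\in\operatorname{Inv}(\rho)$, and $\gamma\in A(n,2;w)$ with $\operatorname{inv}_K(\gamma)=\rho'$. Thus the class of $\gamma$ lies directly below $r$ and has rank $|\operatorname{Inv}(r)|-1$, by the same packet-intersection argument. Iterating, we reach a class of rank $0$. By part (1) of the dual Proposition (the unique class with $|\operatorname{Inv}|=0$), this class is $r_{\operatorname{min}}$.

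\textbf{Step 3 (concatenate).} Joining the two halves gives a chain from $r_{\operatorname{min}}$ through $r$ to $r_{\operatorname{max}}$. Each step raises the rank by exactly one, so the chain has length $|\I(w)|-0$. It is maximal because no covering step can be refined: all steps are rank-one and the rank function is strictly monotone along $\operatorname{inv}_K$.

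\textbf{Main obstacle.} The main thing to verify carefully is the rank bookkeeping: that $\operatorname{inv}_K$ changes $\operatorname{Inv}$ by exactly $\{K\}$ within $A(n,2;w)$, and that the result stays inside $A(n,2;w)$. I would handle both via the observation that distinct packets share at most one element, together with Remark~\ref{rmk:MS} and Lemma~\ref{lem:same} to identify $\operatorname{inv}_K$ with a braid move on a reduced word of $w$. I would also remark that the chain length being independent of $r$ shows $B(n,2;w)$ is graded of rank $|\I(w)|$.
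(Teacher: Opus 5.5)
Your proposal is correct and follows essentially the route the paper intends. The paper states this corollary without a separate proof, obtaining it by iterating Theorem~\ref{th:key} upward and its dual downward and invoking the uniqueness of the classes with $|\operatorname{Inv}|=|\I(w)|$ and $|\operatorname{Inv}|=0$; you additionally make explicit the rank bookkeeping the paper leaves implicit, namely that distinct packets share at most one element, so each $\operatorname{inv}_K$ changes $\operatorname{Inv}$ by exactly $\{K\}$.
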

\begin{example}
    Continuing from Example~\ref{ex:3}, we use the permutation $w=41758236$. A maximal chain is

    \begin{center}
\leavevmode
    \xymatrix{
    154326545376 \ar@{-}[d]\\
154342654376 (154326\textcolor{red}{454}376)\ar@{-}[d]\\
135432654376(15\textcolor{red}{343}2654376) }
\end{center}
   Here, we use the reverse Super-Yamanouchi word as the source, $\I(w)=\{\{7,5,2\},\{7,5,3\}\}$, hence the maximal chain has length $2$.
\end{example}

\bibliography{litlist} \label{references}
\bibliographystyle{amsalpha}

\end{document}